\documentclass[12pt]{article}
\usepackage{amsthm}
\usepackage{amsfonts}
\usepackage{amssymb}
\usepackage{amsmath}
\usepackage{nccmath}
\usepackage{mathtools}
\usepackage{mathrsfs}
\usepackage{setspace}
\usepackage{graphicx}
\usepackage[export]{adjustbox}
\usepackage{hyperref}
\usepackage[numbers,sort&compress]{natbib}
\onehalfspacing
\usepackage{enumerate}
\usepackage{authblk}
\usepackage{placeins}

\newtheorem{theorem}{Theorem}[section]
\newtheorem{lemma}[theorem]{Lemma}

\theoremstyle{definition}
\newtheorem{definition}[theorem]{Definition}
\newtheorem{example}[theorem]{Example}

\newtheorem{corollary}[theorem]{Corollary}

\theoremstyle{remark}
\newtheorem{remark}[theorem]{Remark}

\numberwithin{equation}{section}

\usepackage{amsmath,amssymb,graphicx} 
\usepackage[margin=1in]{geometry} 
\usepackage{enumerate}
\usepackage{authblk}
\usepackage{placeins}
\usepackage{array}
\usepackage{academicons}
\usepackage{xcolor}
\usepackage{svg}
\usepackage[utf8]{inputenc}
\usepackage{booktabs}
\usepackage{tikz}
\usepackage{placeins}
\usepackage{balance}
\usepackage{longtable}
\DeclareMathOperator{\shift}{E}

\providecommand{\keywords}[1]{\textbf{\textit{Keywords:}} #1}
\providecommand{\subjclass}[1]{\textbf{\textit{MSC2020:}} #1}
\begin{document}

\nocite{*} 

\title{Equality of the Casoratian and the Wronsikian for some Sets  of Functions and an Application in Difference Equations}

\author{Hailu Bikila Yadeta \\ email: \href{mailto:haybik@gmail.com}{haybik@gmail.com} }
  \affil{Salale University, College of Natural Sciences, Department of Mathematics\\ Fiche, Oromia, Ethiopia}
\date{\today}
\maketitle
\noindent
\begin{abstract}
\noindent The Casoratian determinants are very important in the study of linear difference equation, just as the Wronskian determinants are very important in the study of linear ordinary differential equations. The Casoratian and Wronskian determinants of a given set of functions are generally different. In this paper, we show that the Wroskian $ W[1, x, x^2,...,x^n ]$  and the Casoratian $ C[1,x, x^2,...,x^n ] $ of the set of functions $ \{1, x, x^2,...,x^n: \,  n \in \mathbb{N} \}$ are equal to each other and independent of the variable $x$. Furthermore, we show that $  W[p_1, p_2,...,p_n, p_{n+1} ]= C[p_1, p_2,..., p_n, p_{n+1} ]$ for any basis $ \mathcal{B} = \{  p_1, p_2,..., p_n, p_{n+1}    \}$ of the vector space spanned by $ \{ 1, x, x^2,...,x^n \}$. We also explore applications of such Casoratians in deriving the general solutions of certain classes of homogeneous linear difference equations.Additionally, the paper discusses scenarios where the Casoratian and Wronskian are proportional, as well as cases where they differ for finite sets of functions.
\end{abstract}

\noindent\keywords{ Wronskian, Casoratian, linear dependence, span, difference equations, shift operators, periodic functions }\\
\subjclass{Primary 15A15}\\
\subjclass{Secondary 39A06}

\section{Introduction}

\begin{definition}
  Let $f_1, f_2,f_3,...,f_n $ are $(n-1)$ times differentiable real-valued or complex-valued functions  defined on some open interval $I$. Then the Wronskian of the set of functions is denoted  by $ W(f_1,f_2,f_3,...,f_n)(x)$  is the function defined on $I$ by the  determinant
  \begin{equation}\label{eq:Wronskian}
W(f_1,f_2,f_3,...,f_n)(x)=  \begin{vmatrix}
 f_1(x)   &   f_2(x)       & f_3(x)         & \cdots    & f_n(x)\\
f_1^{'}(x)&   f_2^{'}(x)       & f_3^{'}(x)      & \cdots    & f_n^{'}(x)\\
f_1^{''}(x)&   f_2^{''}(x)       & f_3^{''}(x)      & \cdots    & f_n^{''}(x)\\
\vdots  & \vdots    & \vdots    & \ddots    & \vdots\\
f_1^{n-1}(x)&   f_2^{n-1}(x)       & f_3^{n-1}(x)      & \cdots    & f_n^{n-1}(x)
  \end{vmatrix}.
\end{equation}
\end{definition}

\begin{definition}
  Let $f_1, f_2,f_3,...,f_n $ are  real-valued or complex-valued functions defined on  $\mathbb{R}$. Then the Casoratian of the functions, denoted  by $ C(f_1,f_2,f_3,...,f_n)(x) $  is the function defined by the  determinant
  \begin{equation}\label{eq:Casoratian}
C(f_1,f_2,f_3,...,f_n)(x)=  \begin{vmatrix}
 f_1(x)   &   f_2(x)       & f_3(x)         & \cdots    & f_n(x)\\
f_1(x+1)&   f_2(x+1)       & f_3(x+1)      & \cdots    & f_n(x+1)\\
f_1(x+2)&   f_2(x+2)       & f_3(x+2)      & \cdots    & f_n(x+2)\\
\vdots  & \vdots    & \vdots    & \ddots    & \vdots\\
f_1(x+n-1)&   f_2(x+n-1)       & f_3(x+n-1)      & \cdots    & f_n(x+n-1)
  \end{vmatrix}.
 \end{equation}
\end{definition}
The Casoratian plays a role in the study of linear difference equations similar to that played by the Wronskian for linear differential equations. In this paper we prove the equality of the Casorantian  and Wronskian of the set $\{1,x,x^2,...,x^n, \, n \in \mathbb{N} \}$. The value for the Casoratian is less obvious while that of Wronskian is more straightforward. We also give the application of such Casoratian in the  general solution of some class of homogeneous linear difference equations.
\begin{definition}
Consider the homogeneous linear difference equation
\begin{equation}\label{eq:lineardifferenceequation}
  a_n(x)y(x+n)+a_{n-1}y(x+n-1)+...+a_1(x)y(x+1)+a_0(x)y(x)=0,
\end{equation}
where $a_i(x), i=0,1,...n $ are defined on some closed interval $[a,b]$ and $a_n(x)a_0(x) \neq 0 $ for all $x\in [a,b] $.  A of solutions $ y_1, y_2,...y_n $ is termed as \emph{fundamental set of solutions } if  the Casoratian $ C(y_1,y_2,y_3,...,y_n)(x) \neq 0  $ for all $ x\in [a,b] $.
\end{definition}
Fundamental set of solutions is important in determining the general solutions of difference equations of the form (\ref{eq:lineardifferenceequation}). Further discussion of difference equations, fundamental set of solutions, etc.  are available in several literatures. See, for example,\cite{KP},\cite{CHR}, \cite{GR}, \cite{KM}, \cite{LB}. The Casoratian and the Wronskian  of a given set of functions are generally different. For instance,
\begin{example}
  Consider the set $\{\cos x, \sin x \}$.
  $$ W[\cos x, \sin x]=1  \neq  \sin 1 = C[\cos x, \sin x] . $$
\end{example}

\section{A case of equality of the Wronskian and  the Casoratian }
\begin{theorem}
   The Wronskian of the set of functions $ \{1, x, x^2,...,x^n \}$,  is independent of the variable $x$, is the number $\prod_{k=0}^{n}k! $.
\end{theorem}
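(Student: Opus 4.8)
The plan is to compute the determinant in \eqref{eq:Wronskian} directly for the family $f_k(x) = x^{k}$, $k = 0, 1, \dots, n$. The key observation is that the $j$-th derivative of $x^{k}$ (rows indexed by $j = 0, 1, \dots, n$) is $\frac{k!}{(k-j)!}x^{k-j}$ when $j \le k$ and $0$ when $j > k$. First I would write out the entry in row $j$, column $k$ of the Wronskian matrix as $\frac{k!}{(k-j)!}x^{k-j}$ (with the convention that this is $0$ for $j > k$), so that the matrix is lower-triangular in a suitable sense once we track powers of $x$.

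Next I would factor the powers of $x$ out of the determinant. Observe that every entry in row $j$ carries a factor $x^{-j}$ relative to a "homogeneous" scaling, and every entry in column $k$ carries a factor $x^{k}$; more precisely, entry $(j,k)$ equals $x^{k-j}$ times the constant $\frac{k!}{(k-j)!}$. By the multilinearity of the determinant, I can pull $x^{k}$ out of column $k$ for each $k = 0, \dots, n$ and $x^{-j}$ out of row $j$ for each $j = 0, \dots, n$; the net power of $x$ is $x^{\sum k - \sum j} = x^{0} = 1$, confirming the $x$-independence. What remains is the determinant of the constant matrix $M$ with $M_{jk} = \frac{k!}{(k-j)!} = \binom{k}{j} j!$ for $j \le k$ and $0$ otherwise.

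Then I would evaluate $\det M$. Since $M_{jk} = 0$ for $j > k$, the matrix $M$ is upper-triangular, so its determinant is the product of the diagonal entries $M_{kk} = \frac{k!}{0!} = k!$ for $k = 0, 1, \dots, n$. Hence $\det M = \prod_{k=0}^{n} k!$, and therefore $W(1, x, \dots, x^{n})(x) = \prod_{k=0}^{n} k!$, independent of $x$, as claimed. (One can alternatively factor $M = L D$ where $D = \operatorname{diag}(0!, 1!, \dots, n!)$ and $L$ is the unipotent upper-triangular Pascal matrix $L_{jk} = \binom{k}{j}$ with all diagonal entries $1$, giving $\det M = \det L \cdot \det D = 1 \cdot \prod_{k=0}^n k!$.)

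There is no serious obstacle here; the only mild care needed is bookkeeping of indices — deciding whether to index rows and columns from $0$ or from $1$, and keeping the triangular structure straight (lower vs.\ upper depending on the indexing convention) — and justifying the factor-pulling step cleanly via multilinearity of the determinant in its rows and columns. I would state the derivative formula as a preliminary remark, then present the factorization and the triangularity in two short displayed computations.
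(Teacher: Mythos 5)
Your proposal is correct and follows essentially the same route as the paper: the Wronskian matrix of $\{1,x,\dots,x^n\}$ is upper triangular with diagonal entries $0!,1!,\dots,n!$, so the determinant is $\prod_{k=0}^{n}k!$. The intermediate step of pulling $x^{k}$ out of columns and $x^{-j}$ out of rows is superfluous (and slightly delicate at $x=0$); the triangular structure alone already yields both the value and the independence of $x$, since the diagonal entries $k!$ contain no $x$.
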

\begin{proof}
By definition of Wronskian, we have
\begin{equation}\label{eq:WronskianofPowers}
 W[1, x, x^2,...,x^n ] =
\begin{vmatrix}
 1   &   x       & x^2         & \cdots    & x^n \\
0 &   1      & 2x      & \cdots    & nx ^{n-1}\\
0  &   0       & 2      & \cdots    & n(n-1)x^{n-2}\\
\vdots  & \vdots    & \vdots    & \ddots    & \vdots\\
0 &   0       & 0      & \cdots    & n!
  \end{vmatrix}.
\end{equation}
 The result easily follows from the fact that the determinant of the upper triangular matrix is the product of all its diagonal entries.
\end{proof}

\begin{remark}
  The number
  \begin{equation}\label{eq:superfactorialn}
   \prod_{k=0}^{n}k! = 0! \times 1!\times  2! \times ...\times n!:=sf(n)
  \end{equation}
  is termed as the \textit{superfactorial} of $n$.
\end{remark}
\begin{lemma}
The Vandermonde determinant
\begin{equation}\label{eq:Vandermonde}
\begin{vmatrix}
1 & x_{0} & x_{0}^{2} & \dots & x_{0}^{n} \\
1 & x_{1} & x_{1}^{2} & \dots & x_{1}^{n} \\
\hdotsfor{5} \\
1 & x_{n} & x_{n}^{2} & \dots & x_{n}^{n}
\end{vmatrix}
=
\prod_{0\leq j < i \leq n} (x_{i} - x_{j})
\end{equation}
\end{lemma}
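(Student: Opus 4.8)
The plan is to prove \eqref{eq:Vandermonde} by induction on $n$, reducing the $(n+1)\times(n+1)$ determinant to an $n\times n$ Vandermonde determinant by elementary column operations. For the base case $n=1$ the $2\times 2$ determinant equals $x_1-x_0$, which is exactly the product over the only pair $0\le j<i\le 1$. Assuming the identity holds with $n-1$ in place of $n$, I would manipulate the $(n+1)\times(n+1)$ matrix of \eqref{eq:Vandermonde} as follows.

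Denote its columns by $C_0,C_1,\dots,C_n$, so that $C_j$ has entries $x_i^{\,j}$ for $i=0,\dots,n$. Processing the columns in decreasing order of the index, from $j=n$ down to $j=1$, replace $C_j$ by $C_j-x_0C_{j-1}$; performing the replacements in this order guarantees that the column $C_{j-1}$ being used is still the original one. After these operations the top row (row $i=0$) has become $(1,0,0,\dots,0)$, since $x_0^{\,j}-x_0\,x_0^{\,j-1}=0$, while for $i\ge 1$ the entry in row $i$, column $j\ge 1$ has become $x_i^{\,j}-x_0\,x_i^{\,j-1}=x_i^{\,j-1}(x_i-x_0)$. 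Expanding the determinant along the first row leaves a single $n\times n$ determinant, and from its $i$-th row I can factor out $(x_i-x_0)$; this produces $\prod_{i=1}^{n}(x_i-x_0)$ times the $n\times n$ Vandermonde determinant in the nodes $x_1,\dots,x_n$. The induction hypothesis evaluates the latter as $\prod_{1\le j<i\le n}(x_i-x_j)$, and multiplying back in the factors $\prod_{i=1}^{n}(x_i-x_0)$ reassembles exactly $\prod_{0\le j<i\le n}(x_i-x_j)$, which closes the induction.

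Since the Vandermonde identity is so classical, there is no genuine obstacle here; the only point needing care is the ordering of the column operations, which is precisely what makes the first row collapse to one nonzero entry and the lower rows factor cleanly. For completeness I would also sketch the alternative ``polynomial'' argument: viewing the determinant $D$ as an element of the polynomial ring in $x_0,\dots,x_n$, it vanishes whenever $x_i=x_j$ with $i\ne j$ (two equal rows), so each linear form $x_i-x_j$ divides $D$; these forms are pairwise non-associate primes, hence their product $P=\prod_{0\le j<i\le n}(x_i-x_j)$ divides $D$. Comparing total degrees, $\deg D=0+1+\cdots+n=\binom{n+1}{2}=\deg P$, so $D=cP$ for a constant $c$; and examining the coefficient of the monomial $x_1x_2^2\cdots x_n^n$, which arises in $D$ only from the identity permutation (with sign $+1$) and in $P$ only by selecting the higher-indexed variable from every factor, one finds $c=1$. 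I would present the inductive proof as the main argument and relegate this to a remark.
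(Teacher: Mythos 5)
Your proposal is correct. Note, however, that the paper itself does not prove this lemma at all: the Vandermonde identity is stated without proof and treated as a classical fact, to be used only as a tool for evaluating the Casoratian $C(1,x,\dots,x^n)$ at the nodes $x_j=x+j$. So there is nothing in the paper to compare your argument against step by step; what you have done is supply the standard proof that the paper implicitly defers to the literature. Your inductive argument is sound: performing the replacements $C_j\mapsto C_j-x_0C_{j-1}$ in decreasing order of $j$ is exactly the point that keeps each operation using an unmodified column, the first row then collapses to $(1,0,\dots,0)$, and factoring $(x_i-x_0)$ out of each remaining row reduces the determinant to the $n\times n$ Vandermonde in $x_1,\dots,x_n$, closing the induction. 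The alternative divisibility-plus-degree argument you sketch is also complete as stated (the forms $x_i-x_j$ are pairwise non-associate primes in the polynomial ring, the degrees agree, and the coefficient of $x_1x_2^2\cdots x_n^n$ fixes the constant as $1$), and it has the minor advantage of working verbatim over any integral domain without bookkeeping of row operations. Either version would serve the paper's purpose.
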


\begin{theorem}
  The Casoratian of the set of function $ \{1, x, x^2,...,x^n \},$ is independent of the variable $x$ and is equal to $\prod_{k=0}^{n}k! $.
\end{theorem}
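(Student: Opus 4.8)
The plan is to recognize that the Casoratian matrix of $\{1, x, x^2, \ldots, x^n\}$ is, after a change of nodes, exactly a Vandermonde matrix, and then to apply the Lemma \eqref{eq:Vandermonde}. First I would write the Casoratian out explicitly: the set has $n+1$ functions, so $C[1,x,\ldots,x^n](x)$ is an $(n+1)\times(n+1)$ determinant whose $(i+1)$-st row (for $i=0,1,\ldots,n$) is obtained by evaluating at $x+i$, namely $\bigl(1,\,(x+i),\,(x+i)^2,\,\ldots,\,(x+i)^n\bigr)$. Thus
\begin{equation*}
C[1, x, x^2, \ldots, x^n](x) =
\begin{vmatrix}
1 & x & x^2 & \cdots & x^n \\
1 & (x+1) & (x+1)^2 & \cdots & (x+1)^n \\
\vdots & \vdots & \vdots & \ddots & \vdots \\
1 & (x+n) & (x+n)^2 & \cdots & (x+n)^n
\end{vmatrix},
\end{equation*}
which is precisely the Vandermonde determinant \eqref{eq:Vandermonde} with the substitution $x_i = x+i$ for $i = 0, 1, \ldots, n$.

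Next I would apply the Vandermonde Lemma directly. Since $x_i - x_j = (x+i) - (x+j) = i - j$, every occurrence of the variable $x$ cancels, which already establishes that the Casoratian is independent of $x$; and we get
\begin{equation*}
C[1, x, x^2, \ldots, x^n](x) = \prod_{0 \le j < i \le n} (x_i - x_j) = \prod_{0 \le j < i \le n} (i - j).
\end{equation*}

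Finally I would evaluate this product of integers by grouping the factors according to the larger index. For each fixed $i \in \{1, 2, \ldots, n\}$, the inner product over $j \in \{0, 1, \ldots, i-1\}$ is $\prod_{j=0}^{i-1}(i-j) = i\,(i-1)\cdots 2 \cdot 1 = i!$, and the $i = 0$ contribution is an empty product equal to $1 = 0!$. Therefore
\begin{equation*}
\prod_{0 \le j < i \le n} (i - j) = \prod_{i=0}^{n} i! = \prod_{k=0}^{n} k! = sf(n),
\end{equation*}
which matches the value of the Wronskian computed earlier and completes the proof. I do not expect any genuine obstacle here: the entire argument is the observation that shifting the evaluation point turns the Casoratian into a Vandermonde determinant, and the only thing requiring care is the bookkeeping — aligning the row/column indices with the statement of the Lemma and correctly reindexing the double product as a superfactorial.
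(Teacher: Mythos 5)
Your proposal is correct and follows essentially the same route as the paper: identify the Casoratian as a Vandermonde determinant with nodes $x_j = x+j$, invoke the Vandermonde lemma so the $x$-dependence cancels, and evaluate $\prod_{0\le j<i\le n}(i-j)$ as the superfactorial. The only cosmetic difference is that you group the factors by the larger index $i$ (each group giving $i!$) while the paper groups by the smaller index $j$ (each group giving $(n-j)!$); both bookkeepings give $\prod_{k=0}^{n}k!$.
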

\begin{proof}
 We want to calculate the Casorati determinant
   \begin{equation}\label{eq:CasoratianofPowers}
C(1,x, x^2,...,x^n)(x)=  \begin{vmatrix}
 1   &   x       & x^2         & \cdots    & x^n \\
1 &   x+1       & (x+1)^2     & \cdots    & (x+1)^n\\
1 &   x+2       & (x+2)^2      & \cdots    & (x+2)^n\\
\vdots  & \vdots    & \vdots    & \ddots    & \vdots\\
1 &   x+n       & (x+n)^2      & \cdots    & (x+n)^n
  \end{vmatrix}.
 \end{equation}
 The Casorati determinant (\ref{eq:CasoratianofPowers}) is a type of Vandermonde determinant (\ref{eq:Vandermonde}), where
 \begin{equation}\label{eq:xjxplusj}
  x_j= x+j,\quad j=0,1,2,...,n.
 \end{equation}
 Therefore,
 \begin{equation}\label{eq:onestepreducedCasorati}
    C(1,x, x^2,...,x^n)(x)=\prod_{0\leq j < i \leq n} (i - j).
 \end{equation}
 For $j=0$, using  $i=1,2,...,n$, we get $n!$. For $j=1$, using  $i=2,3,...,n$, we get $(n-1)!$. For $j=2$, using  $i=3,4,...,n$, we get $(n-2)!$ and so on. Consequently,
 \begin{equation}\label{eq:numericalvalueofCasorati}
   C(1,x, x^2,...,x^n)(x)=   \prod_{k=0}^{n}k!.
 \end{equation}
 \end{proof}

 \begin{theorem}
 We have the following equality.
   \begin{equation}\label{eq:equalitytheorem}
        W(1,x, x^2,...,x^n)(x)= C(1,x, x^2,...,x^n)(x) =   \prod_{k=0}^{n}k!.
 \end{equation}
\end{theorem}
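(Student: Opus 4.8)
The plan is to combine the two preceding theorems, which have already done all the real work. Theorem 2.1 establishes that the Wronskian $W(1,x,x^2,\dots,x^n)(x)$ equals $\prod_{k=0}^{n}k!$ by reducing the Wronskian matrix to upper-triangular form and multiplying the diagonal. Theorem 2.3 establishes that the Casoratian $C(1,x,x^2,\dots,x^n)(x)$ also equals $\prod_{k=0}^{n}k!$, by recognizing the Casorati matrix as a Vandermonde matrix in the nodes $x_j = x+j$ and applying the Vandermonde evaluation from Lemma 2.2, then simplifying the product $\prod_{0\le j<i\le n}(i-j)$ telescopically into a product of factorials.

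Given these, the proof of the final statement is essentially a one-line transitivity argument: both quantities are equal to the common value $sf(n)=\prod_{k=0}^{n}k!$, hence they are equal to each other. First I would invoke Theorem 2.1 to write $W(1,x,x^2,\dots,x^n)(x)=\prod_{k=0}^{n}k!$. Next I would invoke Theorem 2.3 to write $C(1,x,x^2,\dots,x^n)(x)=\prod_{k=0}^{n}k!$. Then I would conclude by transitivity of equality that the displayed chain \eqref{eq:equalitytheorem} holds, and I would also remark that both are independent of $x$, as already recorded in the statements of those theorems.

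There is no genuine obstacle here; the substance lies entirely in the two lemmas/theorems already proved. If I wanted to make the writeup slightly more self-contained or illuminating, I could add a short remark explaining \emph{why} the equality is perhaps surprising — namely that the Wronskian uses derivatives (a limiting/infinitesimal operation) while the Casoratian uses unit shifts (a discrete operation), yet for the monomial basis the two coincide. One could even note the conceptual reason: the finite-difference operators $\Delta^k$ applied to the monomials, when evaluated and arranged as a determinant, differ from the derivative-based Wronskian matrix only by row operations with unit triangular coefficient matrices (the Stirling-number change of basis between $x^k$, the falling factorials, and derivatives), which preserve the determinant. But for the purposes of proving \eqref{eq:equalitytheorem} as stated, the clean route is simply: apply Theorem 2.1, apply Theorem 2.3, and equate.
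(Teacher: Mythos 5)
Your proposal is correct and matches the paper's approach: the paper offers no separate argument for this theorem, treating it exactly as you do — an immediate consequence, by transitivity, of the preceding computations of $W(1,x,\dots,x^n)$ and $C(1,x,\dots,x^n)$ both equaling $\prod_{k=0}^{n}k!$. Your added remark on why the coincidence holds is a nice optional aside but not needed.
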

 \begin{remark}
 Even if the Casoratian and  the Wronskian of a given set of functions are equal, they might be different for those of a subset of  the provided set. For instance,
    $$ W[x, x^2 ]= x^2 \neq x^2 + x = C[x, x^2 ] $$
 for the subset $\{ x, x^2\}$  of $\{1, x, x^2\}$.
 \end{remark}

 \begin{theorem}
   Let $n \in \mathbb{N} $. Let $\mathcal{P}_n$ denote the  vector space of all polynomials with degree less than or equal to $n$ and the zero polynomial. Let $\mathcal{ B}= \{ p_1,p_2,...,p_n, p_{n+1} \}$ be any basis of $\mathcal{P}_n$. Then
   \begin{equation}\label{eq:generalisedequalitytheorem}
     W[p_1,p_2 ...,p_n,p_{n+1}]= C[p_1,p_2 ...,p_n, p_{n+1}]
   \end{equation}
 \end{theorem}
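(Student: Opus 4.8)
The plan is to reduce the general basis case to the already-established special case for the monomial basis $\{1, x, \dots, x^n\}$ via a change-of-basis argument. The key observation is that both the Wronskian and the Casoratian are multilinear and alternating in their column entries, so replacing one set of functions by linear combinations of them multiplies the determinant by the determinant of the change-of-basis matrix. Concretely, write $e_0(x) = 1, e_1(x) = x, \dots, e_n(x) = x^n$ for the standard basis of $\mathcal{P}_n$. Since $\mathcal{B} = \{p_1, \dots, p_{n+1}\}$ is a basis, there is an invertible $(n+1)\times(n+1)$ matrix $A = (a_{jk})$ with $p_k = \sum_{j=0}^{n} a_{jk}\, e_j$ for each $k$.

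First I would record the transformation law for the Wronskian: since differentiation is linear, each column of the Wronskian matrix of $(p_1, \dots, p_{n+1})$ is the corresponding linear combination of the columns of the Wronskian matrix of $(e_0, \dots, e_n)$, i.e. the Wronskian matrix of $\mathcal{B}$ equals (Wronskian matrix of the monomials)$\cdot A$. Taking determinants gives $W[p_1, \dots, p_{n+1}](x) = W[1, x, \dots, x^n](x)\cdot \det A$. Next I would record the identical transformation law for the Casoratian: since the shift $x \mapsto x+m$ is evaluated entry-wise and the linear-combination coefficients $a_{jk}$ do not depend on $x$, the same factorization holds, namely $C[p_1, \dots, p_{n+1}](x) = C[1, x, \dots, x^n](x)\cdot \det A$. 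These two facts are each a one-line consequence of the multilinearity of the determinant in its columns together with linearity of $d/dx$ (resp. the shift operator) on polynomials.

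Then I would combine these with the previously proved Theorem asserting $W[1, x, \dots, x^n](x) = C[1, x, \dots, x^n](x) = \prod_{k=0}^{n} k!$. We obtain
\begin{equation*}
W[p_1, \dots, p_{n+1}](x) = \Big(\prod_{k=0}^{n} k!\Big)\det A = C[p_1, \dots, p_{n+1}](x),
\end{equation*}
which is exactly \eqref{eq:generalisedequalitytheorem}. As a bonus the argument shows the common value is the constant $sf(n)\det A$, independent of $x$.

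I do not expect a serious obstacle here; the proof is essentially bookkeeping. The one point that deserves care is making the column-versus-row convention explicit: in the determinants as written the functions index the columns and the derivative order (resp. shift amount) indexes the rows, so the change of basis acts by right multiplication by $A$ on the matrix, and one must check that the $j$-th derivative (resp. the shift by $j$) of $\sum_k a_{jk} e_k$ is $\sum_k a_{jk}$ times the $j$-th derivative (resp. shift) of $e_k$ — true precisely because the $a_{jk}$ are scalars. A secondary point worth a sentence is that $A$ is genuinely invertible, which is just the statement that $\mathcal{B}$ is a basis of the $(n+1)$-dimensional space $\mathcal{P}_n$; hence $\det A \neq 0$ and, in passing, neither determinant vanishes, so $\mathcal{B}$ is automatically a fundamental set in the sense of the difference-equation definition.
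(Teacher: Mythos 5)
Your proposal is correct and follows essentially the same route as the paper: express the basis $\mathcal{B}$ in terms of the monomials via an invertible change-of-basis matrix $A$, use the fact that both the Wronskian and the Casoratian pick up the same factor $\det A$ under this transformation, and invoke the previously proved equality $W[1,x,\dots,x^n]=C[1,x,\dots,x^n]=\prod_{k=0}^{n}k!$. Your write-up is in fact somewhat more careful than the paper's (making the column transformation law and the invertibility of $A$ explicit), but the underlying argument is the same.
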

 \begin{proof}
   Consider $\{1,x,x^2,...,x^n \}$ as the standard basis of $\mathcal{P}_n$. Then  each of the elements of the basis $\mathcal{B}$ can be written in terms of the standard basis
   \begin{equation}\label{eq:changeofbasis}
     p_i(x) = \sum_{j=1}^{n+1}a_{ij}x^{j-1},\quad i=1,2,...,n+1.
   \end{equation}
   Therefore
   \begin{equation}\label{eq:matrixform}
    [p_1,p_2 ...,p_n,p_{n+1}]= [1,x,x^2,...,x^n] A^t,
   \end{equation}
  where $A=(a_{ij})$ is the $n \times n $  matrix of coefficients and $A^t$ is transpose.
  \begin{align}\label{eq:transfomedequality}
  W[p_1,p_2 ...,p_n,p_{n+1}]& = W[ [1,x,x^2,...,x^n] A^t]  \nonumber \\
     & = W [1,x, x^2,...,x^n] \det A ^t   \nonumber\\
     & = C [1,x, x^2,...,x^n] \det A ^t  \nonumber\\
     &=  C[ [1,x,x^2,...,x^n] A^t]    \nonumber \\
     &=  C[p_1,p_2 ...,p_n,p_{n+1}] \nonumber \\
     & = \det A \prod_{k=0}^{n}k!
  \end{align}
 \end{proof}
 In the next theorem, we describe the equality and non equality of the Wroskian and the Casoratian a given  finite set $S \subset \mathcal{P}_n $. We assume that $ n \in N $ is the smallest integer that $S \subset \mathcal{P}_n    $.
 \begin{theorem}
   Let $ S= \{ p_0,p_1,...,p_m  \}  \subset \mathcal{P}_n $.
   \begin{itemize}
     \item  If $m \leq  n $ and $\text{span}(S)= \mathcal{P}_m  $, then
     $$ W[p_0, p_1,...,p_m]  = C[p_0, p_1,...,p_m] $$
     \item If $m \leq  n $ and $S $ is linearly dependent, then
     $$ W[p_0, p_1,...,p_m]  = 0 = C[p_0, p_1,...,p_m] $$
     \item If  $m < n $  and $\text{span}(S) \neq \mathcal{P}_k,k=1,2,3...,m   $, then
      $$ W[p_0, p_1,...,p_m]  \neq C[p_0, p_1,...,p_m] .$$
     \item If $m > n $, then $S$ is linearly dependent and
      $$ W[p_0, p_1,...,p_m] = 0 = C[p_0, p_1,...,p_m] $$
   \end{itemize}
 \end{theorem}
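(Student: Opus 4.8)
The plan is to dispatch the four items in order of increasing difficulty, using at the outset one elementary remark: if $\sum_i c_i p_i\equiv 0$ is a nontrivial linear relation, then differentiating it $k$ times gives $\sum_i c_i p_i^{(k)}\equiv 0$ and shifting it $k$ times gives $\sum_i c_i p_i(x+k)\equiv 0$ for every $k$, so the columns of the Wronskian matrix and of the Casoratian matrix are linearly dependent and $W[p_0,\dots,p_m]\equiv 0$, $C[p_0,\dots,p_m]\equiv 0$. This is exactly the second item. The fourth item follows because $m>n$ forces $\abs{S}=m+1>n+1=\dim\mathcal P_n$, so $S$ is linearly dependent. For the first item, $\operatorname{span}(S)=\mathcal P_m$ together with $\abs{S}=m+1=\dim\mathcal P_m$ forces $S$ to be a \emph{basis} of $\mathcal P_m$, whence the preceding theorem (equality of $W$ and $C$ for an arbitrary basis of $\mathcal P_n$, applied with $n$ replaced by $m$) gives $W[p_0,\dots,p_m]=C[p_0,\dots,p_m]$.

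The third item is the substantive one, and I would read it under the tacit hypotheses that $S$ is linearly independent (otherwise both determinants vanish by the remark above, so equality holds) and $m\ge 1$ (the Wronskian and Casoratian of a single function each equal that function). Under these hypotheses $n=\max_i\deg p_i>m$, so the echelon basis $q_0,\dots,q_m$ of $V:=\operatorname{span}(S)$ has strictly increasing degrees $e_0<e_1<\dots<e_m=n$, whence $(e_0,\dots,e_m)\ne(0,1,\dots,m)$ and $\sum_j e_j>\binom{m+1}{2}$. Writing $q_i=\sum_j M_{ij}p_j$ with $M$ invertible, the multilinear--alternating property of both determinants (exactly as in the proof of the generalized equality theorem) gives $W[q_0,\dots,q_m]=\det(M)\,W[p_0,\dots,p_m]$ and $C[q_0,\dots,q_m]=\det(M)\,C[p_0,\dots,p_m]$, so it suffices to show $W[q_0,\dots,q_m]\ne C[q_0,\dots,q_m]$.

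To do so I would compare these two polynomials coefficient by coefficient in $x$. Let $\partial$ denote differentiation and $\tau$ the unit shift; on polynomials $\tau=e^{\partial}$, so the forward difference $\Delta=\tau-1$ obeys $\Delta^{\,i}=\partial^{\,i}+\tfrac{i}{2}\partial^{\,i+1}+(\text{a combination of }\partial^{\,k},\ k\ge i+2)$, and since $\tau^{\,i}$ is a unitriangular combination of $\Delta^{0},\dots,\Delta^{\,i}$ the Casoratian equals $\det[(\Delta^{\,i}q_j)(x)]_{0\le i,j\le m}$ while $W[q_0,\dots,q_m]=\det[(\partial^{\,i}q_j)(x)]$. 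Expanding the Casoratian determinant multilinearly along its rows via $\Delta^{\,i}=\sum_{k\ge i}\gamma_{ik}\partial^{\,k}$ (with $\gamma_{ii}=1$, $\gamma_{i,i+1}=i/2$), the tuple $(0,1,\dots,m)$ reproduces $W[q_0,\dots,q_m]$, and every other contributing tuple of orders, having underlying set $T\ne\{0,\dots,m\}$, produces a multiple of a generalized Wronskian $\det[\partial^{\,t}q_j]_{t\in T}$ whose $x$-degree is at most $\sum_j e_j-\sum_{t\in T}t\le D-1$, where $D:=\sum_j e_j-\binom{m+1}{2}$. Because $\{0,\dots,m\}$ is the unique $(m+1)$-element set of nonnegative integers of least sum, the only $T$ reaching degree $D-1$ is $\{0,1,\dots,m-1,m+1\}$, and among admissible tuples (orders distinct, the $i$-th order $\ge i$) it arises only from $(0,1,\dots,m-1,m+1)$, with weight $\gamma_{m,m+1}=m/2$. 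Consequently the coefficient of $x^{D-1}$ in $C[q_0,\dots,q_m]-W[q_0,\dots,q_m]$ equals $\tfrac m2$ times the leading coefficient of $\det[\partial^{\,t}q_j]_{t\in\{0,\dots,m-1,m+1\}}$, namely $\tfrac m2\big(\prod_j c_j\big)\det[(e_j)_t]_{t\in\{0,\dots,m-1,m+1\},\,0\le j\le m}$, where $c_j$ is the leading coefficient of $q_j$ and $(e)_t$ the falling factorial. A short computation (direct for $m=1$; in general Cauchy--Binet against the monomial basis, only the column-minors indexed by $\{0,\dots,m\}$ and by $\{0,\dots,m-1,m+1\}$ surviving) evaluates that determinant as $D\prod_{0\le i<j\le m}(e_j-e_i)$. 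Hence the coefficient of $x^{D-1}$ in $C[q_0,\dots,q_m]-W[q_0,\dots,q_m]$ is $\tfrac m2\,D\,\big(\prod_j c_j\big)\prod_{i<j}(e_j-e_i)$, which is nonzero because $D\ge 1$, $m\ge 1$, the $c_j$ are nonzero and the $e_j$ are distinct; therefore $C\ne W$.

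I expect the last step to be the only real obstacle: confirming that the first correction term past $W$ survives, i.e.\ the Vandermonde-type identity $\det[(e_j)_t]_{t\in\{0,\dots,m-1,m+1\}}=D\prod_{i<j}(e_j-e_i)$, and this is precisely where the hypothesis $V\ne\mathcal P_m$ (equivalently $D\ge 1$) enters. Everything preceding it --- the echelon reduction, the operator expansion $\Delta^{\,i}=\partial^{\,i}+\tfrac i2\partial^{\,i+1}+\cdots$, and the degree bookkeeping --- is formal. If one preferred to avoid that identity, an alternative would be to evaluate $C-W$ for the echelon basis at a single convenient value of $x$ and check nonvanishing there, but extracting the $x^{D-1}$ coefficient seems the cleanest uniform route.
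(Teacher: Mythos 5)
The paper states this theorem without any proof at all (it is followed immediately by the next subsection), so there is nothing of the author's to compare your argument against; your proposal supplies the missing proof, and as far as I can check it is correct. Items 1, 2 and 4 are handled the natural way: a nontrivial relation among the $p_i$ makes the columns of both matrices dependent, $m>n$ forces such a relation by dimension count, and when $\operatorname{span}(S)=\mathcal{P}_m$ the $m+1$ functions form a basis of $\mathcal{P}_m$, so the paper's preceding basis theorem gives $W=C$. Your reading of the third item is also the right one, and worth flagging as a correction to the statement itself: as written it is false without the tacit hypotheses you impose (e.g.\ $S=\{x^2,2x^2\}$ has $W=C\equiv 0$, and $S=\{x^2\}$ has $W=C=x^2$), so linear independence and $m\ge 1$ are genuinely needed, not merely convenient.

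The substantive part, your proof of the third item, checks out. Passing to the echelon basis only rescales both determinants by $\det M$; the paper's own identity expressing the Casoratian as $\det[\Delta^i q_j]$ legitimizes the operator expansion $\Delta^i=\partial^i+\tfrac i2\partial^{i+1}+\cdots$ on polynomials; in the row-multilinear expansion only tuples of distinct orders $k_i\ge i$ survive, and the constraint $k_i\ge i$ does force the unique tuples $(0,1,\dots,m)$ and $(0,1,\dots,m-1,m+1)$ for the order-sets of sum $\binom{m+1}{2}$ and $\binom{m+1}{2}+1$, with weights $1$ and $m/2$ and positive sign. The key identity $\det\bigl[(e_j)_t\bigr]_{t\in\{0,\dots,m-1,m+1\}}=\bigl(\sum_j e_j-\binom{m+1}{2}\bigr)\prod_{i<j}(e_j-e_i)$ is correct: in the Cauchy--Binet expansion only the column sets $\{0,\dots,m\}$ and $\{0,\dots,m-1,m+1\}$ give nonzero coefficient minors, contributing $-\binom{m+1}{2}$ and $1$ times the corresponding Vandermonde-type minors $\prod_{i<j}(e_j-e_i)$ and $\bigl(\sum_j e_j\bigr)\prod_{i<j}(e_j-e_i)$. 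Hence the coefficient of $x^{D-1}$ in $C-W$ is $\tfrac m2\,D\,\bigl(\prod_j c_j\bigr)\prod_{i<j}(e_j-e_i)\ne 0$, so $C\not\equiv W$. Quick spot checks agree: for $\{x,x^2\}$ your formula predicts coefficient $1$ and indeed $C-W=x$; for $\{1,x,x^3\}$ it predicts $6$ and indeed $C-W=6$. In short, the proposal is sound and strictly more informative than the paper, which asserts the theorem without justification.
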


\subsection{Wronskian as the limit of Casoratian}
 In the next theorem we show that, if the powers of shift operators $E^i, i=1, 2 ,...,n $ in  the Casoratian determinant are replaced by the corresponding powers of the difference operate $ \triangle ^i,  i=1, 2 ,...,n $, where $\triangle := E- I $, yields same  value as the Casoratian. Linear difference equation can alternatively written interims of shift operator or difference operators. However the shift operators are used preferable for the simplicity as well as the easy determination of the order of the difference equations. See for example,\cite{KP}.

\begin{theorem}
\begin{equation}\label{eq:casoratianofdifference}
C(f_0, f_1, f_2,...,f_n)(x)=   \begin{vmatrix}
 f_0   &   f_1      & f_2         & \cdots    & f_n \\
\triangle f_0 &  \triangle f_1       & \triangle f_2     & \cdots    & \triangle f_n \\
 \triangle^2 f_0 &  \triangle^2 f_1       & \triangle^2 f_2      & \cdots    & \triangle^2 f_n \\
\vdots  & \vdots    & \vdots    & \ddots    & \vdots\\
\triangle ^n f_0 &  \triangle ^n  f_1      & \triangle ^n  f_2      & \cdots    &  \triangle ^n f_n
  \end{vmatrix}.
 \end{equation}
\end{theorem}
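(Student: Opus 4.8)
The plan is to recognize that the matrix appearing on the right-hand side of \eqref{eq:casoratianofdifference} is obtained from the Casorati matrix in \eqref{eq:Casoratian} (with $f_1,\dots,f_n$ relabeled $f_0,\dots,f_n$) by a sequence of elementary row operations, and hence has the same determinant. Concretely, since $\triangle = E - I$ and the shift operator $E$ commutes with the identity $I$, the binomial theorem gives
\begin{equation}\label{eq:binomialdelta}
\triangle^k = (E-I)^k = \sum_{j=0}^{k} \binom{k}{j}(-1)^{k-j} E^j, \qquad k=0,1,2,\dots,n.
\end{equation}
Thus for each fixed $k$, the $k$-th row $\bigl(\triangle^k f_0,\ \triangle^k f_1,\ \dots,\ \triangle^k f_n\bigr)$ of the matrix in \eqref{eq:casoratianofdifference} is the linear combination $\sum_{j=0}^{k}\binom{k}{j}(-1)^{k-j}$ of the rows $\bigl(E^j f_0,\dots,E^j f_n\bigr) = \bigl(f_0(x+j),\dots,f_n(x+j)\bigr)$ of the Casorati matrix.

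The first step is to record \eqref{eq:binomialdelta} and to phrase the row relation in matrix form: writing $M_E$ for the Casorati matrix with entries $(M_E)_{kj} = E^{k} f_{j} = f_j(x+k)$ and $M_\triangle$ for the matrix in \eqref{eq:casoratianofdifference} with entries $(M_\triangle)_{kj} = \triangle^{k} f_j$, we have $M_\triangle = T\,M_E$, where $T = (t_{ki})_{0\le k,i\le n}$ is the constant matrix with $t_{ki} = \binom{k}{i}(-1)^{k-i}$ for $i \le k$ and $t_{ki}=0$ for $i>k$. The second step is the key observation that $T$ is lower triangular with every diagonal entry $t_{kk} = \binom{k}{k}(-1)^{0} = 1$, so $\det T = 1$. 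The third step is then immediate: by multiplicativity of the determinant,
\begin{equation}\label{eq:detproduct}
\det M_\triangle = \det T \cdot \det M_E = 1\cdot C(f_0,f_1,\dots,f_n)(x) = C(f_0,f_1,\dots,f_n)(x),
\end{equation}
which is exactly \eqref{eq:casoratianofdifference}.

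There is no serious obstacle here; the only point requiring a line of justification is the operator identity \eqref{eq:binomialdelta}, which is valid precisely because $E$ and $I$ commute, so the ordinary binomial expansion applies to $(E-I)^k$. Everything else is the standard fact — already used in the proof of \eqref{eq:WronskianofPowers} and in \eqref{eq:transfomedequality} — that adding to a row a linear combination of the rows above it (equivalently, left-multiplying by a unipotent lower-triangular matrix) leaves a determinant unchanged. It is worth noting that this argument is the exact discrete analogue of the trivial identity in the differential setting, where replacing the derivative rows $f^{(k)}$ by $(D-\lambda)^k f$ or by any polynomial $D^k + (\text{lower order})$ in $D$ leaves the Wronskian unchanged; here the role of $D$ is played by $E$ and the role of the chosen polynomial by $(E-I)^k$.
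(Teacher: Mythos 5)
Your proof is correct and is essentially the same argument as the paper's: both rest on the fact that the rows $(\triangle^k f_j)_j$ and $(E^k f_j)_j$ are related by a unipotent lower-triangular (binomial) transformation, so the two determinants coincide. The only cosmetic difference is direction — the paper expands $E^k=(I+\triangle)^k$ and phrases it as adding multiples of earlier rows, while you expand $\triangle^k=(E-I)^k$ and package the same row operations as left-multiplication by the explicit matrix $T$ with $\det T=1$, which is arguably the cleaner formulation.
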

\begin{proof}
  In an $ n \times n $, adding a linear combination any $(n-1)$ rows to the remaining row does not change the determinant of the resulting matrix. By the linearity of the operators $\triangle^i $, the result follows:

  \begin{itemize}
    \item  Add the first row onto the second row. Equivalently $I+ \triangle =E $
  \item Add the sum of  first row plus twice the second row onto the third row. Equivalently, $ I +  2 \triangle + \triangle ^2 =E^2 $,
  \item ...............................................................................
  \item  Lastly, $ I   + n \triangle  + \binom{n}{2}  \triangle^{2}+...+\binom{n}{n-1}\triangle ^{n-1}  + \triangle ^n = E^n.   $
  \end{itemize}
  Thus the determinant of the resulting matrix is the usual Casoratian defined interims of the shift operators.
\end{proof}

\begin{lemma}
  Let $h \in \mathbb{R}, h \neq 0 $. Let $\Delta_h:= (E^h-I)$, where $E^h$ is the shift operator which is defined by $E^hy(x):=y(x+h)$, and $I$ is the identity operator. For $ n\in \mathbb{N} $, let $y$ is $n$ times continuously differentiable function on some open interval I. Then we have the following result
  \begin{equation}\label{eq:limitofdiffqutient}
    \lim_{h\rightarrow 0}\frac{\Delta_h^ny(x)}{h^n}=y^{n}(x)
  \end{equation}
  \end{lemma}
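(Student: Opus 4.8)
The plan is to reduce the statement to Taylor's theorem. First I would expand the operator power: since the shift $E^h$ commutes with the identity $I$, the binomial theorem gives
\begin{equation*}
  \Delta_h^{n}y(x) = (E^h - I)^{n}y(x) = \sum_{k=0}^{n}(-1)^{n-k}\binom{n}{k}\,y(x+kh).
\end{equation*}
Next, because $y\in C^{n}$ on the open interval $I$ and $x\in I$, for all small $h$ Taylor's formula with Peano remainder at the base point $x$ applies: for each $k\in\{0,1,\dots,n\}$,
\begin{equation*}
  y(x+kh) = \sum_{j=0}^{n}\frac{(kh)^{j}}{j!}\,y^{(j)}(x) + (kh)^{n}\varepsilon_{k}(h), \qquad \varepsilon_{k}(h)\to 0 \ \text{ as } h\to 0,
\end{equation*}
where the finitely many error functions $\varepsilon_{k}$ are all $o(1)$.

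Substituting this into the previous identity and interchanging the two finite sums gives
\begin{equation*}
  \Delta_h^{n}y(x) = \sum_{j=0}^{n}\frac{h^{j}}{j!}\,y^{(j)}(x)\,\Bigl(\sum_{k=0}^{n}(-1)^{n-k}\binom{n}{k}k^{j}\Bigr) + h^{n}\sum_{k=0}^{n}(-1)^{n-k}\binom{n}{k}k^{n}\varepsilon_{k}(h).
\end{equation*}
The bracketed sums are the values at $0$ of the $n$-fold forward differences of the monomials $t\mapsto t^{j}$; since $\triangle^{n}$ annihilates every polynomial of degree $<n$ (as already used in the proof of the theorem on $C(f_0,\dots,f_n)$ in terms of $\triangle$) and sends $t^{n}$ to the constant $n!$, we have $\sum_{k}(-1)^{n-k}\binom{n}{k}k^{j}=0$ for $0\le j\le n-1$ and $=n!$ for $j=n$. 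Hence every term with $j<n$ disappears, the $j=n$ term reduces to $\tfrac{h^{n}}{n!}\cdot n!\cdot y^{(n)}(x)=h^{n}y^{(n)}(x)$, and dividing by $h^{n}\neq 0$ yields
\begin{equation*}
  \frac{\Delta_h^{n}y(x)}{h^{n}} = y^{(n)}(x) + \sum_{k=0}^{n}(-1)^{n-k}\binom{n}{k}k^{n}\varepsilon_{k}(h).
\end{equation*}
Letting $h\to 0$, the right-hand sum is a fixed finite combination of the $\varepsilon_{k}(h)$, each tending to $0$, so the whole expression tends to $y^{(n)}(x)$. Since Taylor's formula and the estimates $\varepsilon_{k}(h)\to 0$ are insensitive to the sign of $h$, this is a genuine two-sided limit.

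The step that needs the most care is the remainder control: one must know that the Peano remainders $(kh)^{n}\varepsilon_{k}(h)$ are \emph{collectively} $o(h^{n})$, and it is precisely here that the hypothesis $y\in C^{n}$ (so that $y^{(n)}(x)$ exists and Taylor's theorem with a controllable remainder is available) and the finiteness of the index set $k\in\{0,\dots,n\}$ enter; a naive induction on $n$ stumbles here because $h^{-n}\Delta_h^{n}y(x)=h^{-1}\bigl(h^{-(n-1)}\Delta_h^{n-1}y(x+h)-h^{-(n-1)}\Delta_h^{n-1}y(x)\bigr)$ has $h$ appearing in two places at once. An alternative, fully elementary route that sidesteps this is to prove by induction on $n$ the integral representation
\begin{equation*}
  \Delta_h^{n}y(x) = \int_{0}^{h}\!\cdots\!\int_{0}^{h} y^{(n)}(x+t_{1}+\cdots+t_{n})\,dt_{1}\cdots dt_{n},
\end{equation*}
the base case being the fundamental theorem of calculus and the inductive step combining the fundamental theorem of calculus applied to the $C^{1}$ function $\Delta_h^{n-1}y$, the identity $(\Delta_h^{n-1}y)'=\Delta_h^{n-1}y'$, and the inductive hypothesis applied to $y'\in C^{n-1}$. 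Then $h^{-n}\Delta_h^{n}y(x)$ is the average of $y^{(n)}$ over an $n$-cube of side $\abs{h}$ shrinking to $x$, so continuity of $y^{(n)}$ at $x$ forces convergence to $y^{(n)}(x)$.
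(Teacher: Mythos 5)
Your proof is correct, and it takes a genuinely different route from the paper's. Both arguments start from the same binomial expansion $\Delta_h^n y(x)=\sum_{k=0}^n(-1)^{n-k}\binom{n}{k}y(x+kh)$ and both ultimately rest on the same combinatorial fact (the paper phrases it via Stirling numbers of the second kind, you phrase it as $\triangle^n$ annihilating $t^j$ for $j<n$ and sending $t^n$ to $n!$ — these are the same identity $\sum_k(-1)^{n-k}\binom{n}{k}k^j=0$ for $j<n$, $=n!$ for $j=n$). The difference is in how the limit is extracted: the paper applies L'H\^opital's rule $n$ times in the variable $h$, differentiating the sum $\sum_r\binom{n}{r}(-1)^{n-r}y(x+hr)$ against $h^n$ and invoking the identity at each intermediate stage to confirm the $0/0$ form, using continuity of $y^{(n)}$ only at the final evaluation; you instead insert Taylor's formula with Peano remainder at the single base point $x$ and let the identity kill all terms of order $<n$ in one stroke, leaving a fixed finite combination of $o(1)$ errors. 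Your route avoids the bookkeeping of verifying an indeterminate form at every L'H\^opital step and in fact needs slightly less than the stated hypothesis (existence of $y^{(n)}$ at the point $x$, with $y\in C^{n-1}$ nearby, suffices), whereas the paper's argument leans on $C^n$ regularity throughout; your closing alternative via the iterated-integral representation $\Delta_h^n y(x)=\int_0^h\cdots\int_0^h y^{(n)}(x+t_1+\cdots+t_n)\,dt_1\cdots dt_n$ does use the full $C^n$ hypothesis but buys a very transparent interpretation of $h^{-n}\Delta_h^n y(x)$ as an average of $y^{(n)}$ over a shrinking cube, which neither the paper's proof nor your first argument makes visible. Your remark that a naive induction stalls because $h$ enters both as step size and as the variable of the outer difference quotient is well taken, and the integral representation is exactly the standard way around it.
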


  \begin{proof}
  $ \Delta_h^n = (E^h-I)^n = \sum_{i=0}^{n} \binom{n}{r} (-1)^{n-r} E^{nr}$. By  using the fact that  the Stirling's numbers of the second kind
  $$  F(n,k):=\sum_{r=0}^{n} \frac{ (-1)^{n-r}  r^k}{r!(n-r)!} = \begin{cases}
                                                                    & 0, \mbox{ if } 0\leq k <n, \\
                                                                    & 1,  \mbox{  if  } k = n .
                                                                 \end{cases}  $$
  and  applying L'H\^{o}pital's rule to the indeterminate forms one after the other, we get
    \begin{align*}
    \lim_{h\rightarrow 0}\frac{\Delta_h^ny(x)}{h^n} & = \lim_{h\rightarrow 0}\frac{1}{h^n} \sum_{r=0}^{n} \binom{n}{r}(-1)^{n-r}y(x+hr)\\
       & =n!\lim_{h\rightarrow 0} \frac{1}{h^n} \sum_{r=0}^{n} \frac{ (-1)^{n-r} y(x+hr)}{r!(n-r)!}\\
        & =n!\lim_{h\rightarrow 0} \frac{1}{nh^{n-1}} \sum_{r=0}^{n} \frac{ (-1)^{n-r} y'(x+hr) r}{r!(n-r)!}\\
       & =.....................\\
        & = n!\lim_{h\rightarrow 0} \frac{1}{n!} \sum_{r=0}^{n} \frac{ (-1)^{n-r} y^{(n)}(x+hr) r^n}{r!(n-r)!}\\
         &= y^{(n)}(x).
    \end{align*}
  \end{proof}
\begin{theorem}
    Let the functions $y_1,y_2,...,y_n$ be $(n-1)$ times continuously differentiable on some open interval $I$. Let $ C_n^h(x)$ denote the Casoratian  which is the determinant of an $n \times n $ matrix whose $ij$-entry is $ E^{(i-1)h}y_j(x)$,  and $W_n(x)$ is a Wroskian which is the determinant of an $n \times n $ matrix whose $ij$-entry is $y^{(i-1)}_j(x)$, where $i,j=1,2,...,n$. Then we have the following limit
   \begin{equation}\label{eq:Casorati-wronskian}
     \lim_{h\rightarrow 0 } \frac{C_n^h(x)}{h^{\frac{n(n-1)}{2}}}= W_n(x), \quad \forall x \in I.
   \end{equation}
\end{theorem}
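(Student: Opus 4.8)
The plan is to combine the two results immediately preceding this theorem: the difference-operator form of the Casoratian, and the limit $\lim_{h\to 0}\Delta_h^{k}y(x)/h^{k}=y^{(k)}(x)$ supplied by the preceding lemma. First I would upgrade the difference-operator identity for the Casoratian from step $1$ to arbitrary step size $h$. The row-reduction argument that proves \eqref{eq:casoratianofdifference} applies verbatim with $\triangle=E-I$ replaced by $\Delta_h=E^{h}-I$, because $E^{kh}=(I+\Delta_h)^{k}=\sum_{r=0}^{k}\binom{k}{r}\Delta_h^{r}$ exhibits the $(k+1)$-st row $\big(E^{kh}y_{1},\dots,E^{kh}y_{n}\big)$ of the Casoratian matrix as a linear combination, with leading coefficient $1$, of the rows $\big(\Delta_h^{r}y_{1},\dots,\Delta_h^{r}y_{n}\big)$ for $r=0,\dots,k$; adding such combinations to a row leaves the determinant unchanged. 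Hence for every $h\neq 0$ small enough that $x,x+h,\dots,x+(n-1)h\in I$,
\[
C_n^{h}(x)=\det\Big[\Delta_h^{\,i-1}y_{j}(x)\Big]_{i,j=1}^{n}.
\]

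Next I would extract a power of $h$ from each row. By multilinearity of the determinant in its rows I can pull the scalar $h^{\,i-1}$ out of row $i$, and since $\sum_{i=1}^{n}(i-1)=\tfrac{n(n-1)}{2}$ this yields
\[
\frac{C_n^{h}(x)}{h^{\,n(n-1)/2}}=\det\!\left[\frac{\Delta_h^{\,i-1}y_{j}(x)}{h^{\,i-1}}\right]_{i,j=1}^{n}.
\]
Applying the preceding lemma with its parameter $n$ replaced by $i-1\le n-1$ (the case $i=1$ being the trivial identity $y_{j}(x)=y_{j}^{(0)}(x)$), each entry tends to $y_{j}^{(i-1)}(x)$ as $h\to 0$. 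Since the determinant is a polynomial, hence continuous, function of its $n^{2}$ entries, the limit passes inside, giving
\[
\lim_{h\to 0}\frac{C_n^{h}(x)}{h^{\,n(n-1)/2}}=\det\Big[y_{j}^{(i-1)}(x)\Big]_{i,j=1}^{n}=W_n(x),
\]
which is \eqref{eq:Casorati-wronskian}.

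The only step requiring genuine care is the entrywise convergence under just $C^{n-1}$ smoothness. The highest difference order occurring in the matrix is $n-1$, which exactly matches the available differentiability of the $y_{j}$, so the preceding lemma applies with nothing to spare; in fact its proof (Taylor expansion with the Peano form of the remainder, together with the Stirling-number identities $\sum_{r=0}^{k}\binom{k}{r}(-1)^{k-r}r^{m}=k!\,S(m,k)$, which vanish for $m<k$ and equal $k!$ for $m=k$) needs only $y_{j}\in C^{\,i-1}$ to handle row $i$. The remaining ingredients — the row operations, the multilinear extraction of $h^{\,i-1}$, and the continuity of the determinant — are routine, so I do not expect any serious obstacle beyond keeping this bookkeeping straight.
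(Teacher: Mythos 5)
Your proposal is correct and follows essentially the same route as the paper's (much terser) proof: reduce the step-$h$ Casoratian to the determinant with entries $\Delta_h^{\,i-1}y_j(x)$ via unimodular row operations, pull out $h^{i-1}$ from each row by multilinearity, apply the limit $\Delta_h^{k}y/h^{k}\to y^{(k)}$ entrywise, and conclude by continuity of the determinant. You have simply made explicit the bookkeeping (including the smoothness matching $i-1\le n-1$) that the paper leaves implicit.
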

\begin{proof}
  The result follows  by using  elementary properties of determinant as functions of rows, the fact that a determinant is a continuous function and by (\ref{eq:casoratianofdifference}), and (\ref{eq:limitofdiffqutient}).
\end{proof}
\begin{corollary}
  Let the function $y_1,y_2,...,y_n$ be $n$ times continuously differentiable on some open interval $I$ whose Wronskian $ W_n(x) > 0$ (or $ W_n(x) <0 $  ). Then for sufficiently small $h > 0 $ , the Casoratian  $ W^h_n(x) > 0$ (or   $ W^h_n(x) < 0$ ).
\end{corollary}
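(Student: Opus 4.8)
The plan is to deduce the corollary directly from the limit relation (\ref{eq:Casorati-wronskian}) of the preceding theorem, identifying the quantity $W^h_n(x)$ in the corollary with the Casoratian $C^h_n(x)$ used there. First I would fix $x \in I$. Since $h > 0$ forces $h^{n(n-1)/2} > 0$, the Casoratian and its normalization share a sign, namely
\[
\sgn C^h_n(x) = \sgn\!\left( \frac{C^h_n(x)}{h^{n(n-1)/2}} \right),
\]
so it is enough to control the sign of the normalized quantity, which by the theorem tends to $W_n(x)$ as $h \to 0^+$.

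Assume $W_n(x) > 0$; the case $W_n(x) < 0$ follows by the same reasoning applied to $-y_1, y_2, \dots, y_n$ (replacing $y_1$ by $-y_1$ negates the first column of both the Casoratian and the Wronskian matrices, hence negates both determinants), or simply by reversing the inequalities below. Applying the definition of the limit in (\ref{eq:Casorati-wronskian}) with $\varepsilon := W_n(x)/2 > 0$, there is $h_0 > 0$ such that for all $h$ with $0 < h < h_0$,
\[
\left| \frac{C^h_n(x)}{h^{n(n-1)/2}} - W_n(x) \right| < \frac{W_n(x)}{2},
\]
whence $C^h_n(x)/h^{n(n-1)/2} > W_n(x)/2 > 0$, and therefore $C^h_n(x) > 0$. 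That is precisely the claim.

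This argument delivers the conclusion pointwise in $x$, with the threshold $h_0$ allowed to depend on $x$, which is all the statement literally demands. If one wants $h_0$ to work uniformly on a compact subinterval $[a,b] \subset I$ on which $W_n > 0$, the only additional ingredient is that the convergence in (\ref{eq:Casorati-wronskian}) is uniform on $[a,b]$; this follows because each matrix entry $\triangle_h^{i} y_j(x)/h^{i}$ converges to $y_j^{(i)}(x)$ uniformly on $[a,b]$ — a consequence of Taylor's theorem with remainder together with the uniform continuity of $y_j^{(i)}$ on $[a,b]$, which is where the extra hypothesis that the $y_j$ are $n$ (rather than merely $n-1$) times continuously differentiable is convenient — and the determinant is a polynomial in its entries, hence uniformly continuous on bounded sets. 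I expect this passage from pointwise to locally uniform control to be the only step needing genuine care; the sign-chasing itself is immediate from the earlier theorem.
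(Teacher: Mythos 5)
Your argument is correct and is exactly the route the paper intends: the corollary is stated without proof as an immediate consequence of the limit theorem, and your sign-preservation argument (dividing by the positive factor $h^{n(n-1)/2}$ and applying the definition of the limit with $\varepsilon = |W_n(x)|/2$) is precisely that consequence made explicit. Your additional remark on obtaining a threshold $h_0$ uniform over compact subintervals goes beyond what the paper addresses, but it is sound and correctly identifies where the hypothesis of $n$-fold (rather than $(n-1)$-fold) continuous differentiability is used.
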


\subsection{An application of the Casoratian $ C[1, x, x^2,...,x^n ] $}
\begin{theorem}
  Let $m \in \mathbb{N} $ and $\lambda \in \mathbb{R}, \lambda \neq 0 $. The general solution of the homogeneous linear difference equation
  \begin{equation}\label{eq:mpoweroperator}
   (E - \lambda I)^my(x)=0
  \end{equation}
 is
 \begin{equation}\label{eq:solutiontompoweroperator}
   y(x)= (\mu_1(x)+\mu_2(x) x +....+\mu_m(x) x^{m-1})|\lambda|^x,
 \end{equation}
where $ \mu_1, \mu_2,...,\mu_m $ are arbitrary $1$-periodic functions if $ \lambda > 0 $, and $ \mu_1, \mu_2,...,\mu_m \in \mathbb{AP}_1 $ are arbitrary $1$-antiperiodic functions if $ \lambda < 0 $.
\end{theorem}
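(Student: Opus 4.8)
The plan is to strip \eqref{eq:mpoweroperator} down to the pure difference operator by the substitution $y(x) = |\lambda|^{x} z(x)$, reduce everything to a description of $\ker (E - \sigma I)^{m}$ where $\sigma := \sgn(\lambda) \in \{1,-1\}$, and then obtain that description by induction on $m$. \textbf{Step 1 (conjugation).} First I would record the elementary identity
\[
(E - \lambda I)\bigl(|\lambda|^{x} f(x)\bigr) = |\lambda|^{x+1}(E - \sigma I) f(x),
\]
valid for every $f$, which is immediate from $\lambda = \sigma|\lambda|$. Iterating it $m$ times gives $(E - \lambda I)^{m}\bigl(|\lambda|^{x} z(x)\bigr) = |\lambda|^{x+m}(E - \sigma I)^{m} z(x)$, and since $|\lambda|^{x+m} > 0$, writing $z(x) := |\lambda|^{-x} y(x)$ shows that $y$ solves \eqref{eq:mpoweroperator} iff $(E - \sigma I)^{m} z = 0$. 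Under this substitution the target formula \eqref{eq:solutiontompoweroperator} becomes exactly the assertion that $z(x) = \mu_{1}(x) + \mu_{2}(x)x + \dots + \mu_{m}(x)x^{m-1}$ with $\mu_{j}(x+1) = \sigma\mu_{j}(x)$ for each $j$ — i.e. $\mu_{j}$ is $1$-periodic when $\sigma=1$ and $1$-antiperiodic when $\sigma=-1$.

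\textbf{Step 2 (sufficiency).} Next I would prove the product rule: if $\mu(x+1) = \sigma\mu(x)$, then $(E - \sigma I)\bigl(\mu(x) f(x)\bigr) = \sigma\,\mu(x)\,\triangle f(x)$ for every $f$ (one line). Since the class of functions satisfying the $\sigma$-relation is closed under scalar multiplication, iterating yields $(E - \sigma I)^{m}\bigl(\mu(x) x^{j}\bigr) = \sigma^{m}\mu(x)\,\triangle^{m} x^{j}$, and $\triangle^{m} x^{j} = 0$ for $0 \le j \le m-1$ because $\triangle$ lowers the degree of a polynomial by exactly one — the same fact that makes \eqref{eq:WronskianofPowers} triangular. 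Hence every $z$ of the displayed form lies in $\ker(E - \sigma I)^{m}$; this is the easy inclusion and also shows that the $\mu_{j}$ serve as arbitrary parameters.

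\textbf{Step 3 (necessity, induction on $m$).} For $m=1$, $(E - \sigma I)z = 0$ is precisely $z(x+1) = \sigma z(x)$, so $z = \mu_{1}$ of the required type. Assume the claim for $m-1$ and let $(E - \sigma I)^{m} z = 0$. Put $w := (E - \sigma I)z$; then $(E - \sigma I)^{m-1} w = 0$, so by induction $w(x) = \sum_{i=0}^{m-2} \nu_{i+1}(x)\,x^{i}$ with $\nu_{i+1}(x+1) = \sigma\nu_{i+1}(x)$. To invert $E - \sigma I$ on $w$ I would use that $\triangle \colon \mathcal{P}_{i+1} \to \mathcal{P}_{i}$ is onto (its kernel on $\mathcal{P}_{i+1}$ is the constants, so the image has dimension $i+1 = \dim \mathcal{P}_{i}$) and pick $g_{i} \in \mathcal{P}_{i+1}$ with $\triangle g_{i} = x^{i}$. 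By the product rule, $(E - \sigma I)\bigl(\nu_{i+1}(x) g_{i}(x)\bigr) = \sigma\,\nu_{i+1}(x)\,x^{i}$, so $z_{p}(x) := \sigma\sum_{i=0}^{m-2} \nu_{i+1}(x) g_{i}(x)$ satisfies $(E - \sigma I) z_{p} = w$. Then $(E - \sigma I)(z - z_{p}) = 0$, hence $z - z_{p} = \mu_{1}$ with $\mu_{1}(x+1) = \sigma\mu_{1}(x)$; expanding each $g_{i}$ (degree $\le m-1$) and collecting powers of $x$ writes $z = z_{p} + \mu_{1}$ in the required form, the coefficient of each $x^{j}$ being a scalar combination of functions obeying the $\sigma$-relation and hence obeying it itself. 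This closes the induction and, with Step 2, proves \eqref{eq:solutiontompoweroperator}.

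\textbf{Main obstacle.} The one genuinely substantive step is Step 3: over $\mathbb{R}$ the ``constants'' annihilated by $\triangle$ form the infinite-dimensional space of $1$-periodic functions (and, in the case $\sigma=-1$, of $1$-antiperiodic functions), so one cannot simply quote the finite-dimensional theory of order-$m$ linear recurrences — the particular solution $z_{p}$ must be built by hand, which is where surjectivity of $\triangle$ on polynomial spaces enters. Everything else is routine. If uniqueness of the representation \eqref{eq:solutiontompoweroperator} is also wanted, it follows by evaluating $\sum_{j} \mu_{j+1}(x)(x+k)^{j} = 0$ at $k = 0, 1, \dots, m-1$ and inverting the Vandermonde matrix of the points $x, x+1, \dots, x+m-1$, cf.\ \eqref{eq:Vandermonde} — the very determinant behind the Casoratian value \eqref{eq:numericalvalueofCasorati}.
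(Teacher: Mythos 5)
Your proposal is correct, but it takes a genuinely different route from the paper's. The paper first checks by induction that each $x^{k-1}\mu_k(x)|\lambda|^x$ solves the equation, and then, for necessity, writes $\tilde y(x)=\sum_{i=1}^m|\lambda|^x\mu_i(x)x^{i-1}$, applies $E^j$ for $j=0,\dots,m-1$ (using $\mu(x+j)=(\sgn\lambda)^j\mu(x)$), and solves the resulting $m\times m$ system $MX=\tilde Y$; the crux is that $\det M=|\lambda|^{mx}\lambda^{m(m-1)/2}\prod_k k!\neq 0$, which is exactly the Vandermonde/Casoratian value $C[1,x,\dots,x^{m-1}]$ computed earlier — this is why the theorem is billed as an application of that Casoratian — and the periodicity or antiperiodicity of the recovered $\mu_i=(M^{-1}\tilde Y)_i$ is then asserted as easy to check. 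You instead conjugate by $|\lambda|^x$ to reduce to $(E-\sigma I)^m z=0$ with $\sigma=\sgn(\lambda)$, and characterize that kernel by induction on $m$, building a particular solution from the surjectivity of $\triangle:\mathcal{P}_{i+1}\to\mathcal{P}_i$ together with the product rule $(E-\sigma I)(\mu f)=\sigma\,\mu\,\triangle f$ for $\sigma$-(anti)periodic $\mu$. What your route buys: it is self-contained, it confronts directly the real subtlety you flag — that $\ker(E-\sigma I)$ is the infinite-dimensional space of $1$-(anti)periodic functions, so the finite-dimensional recurrence theory cannot simply be quoted — and it carries the necessity direction through in full detail, including the periodicity of the coefficients, which the paper leaves to the reader. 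What the paper's route buys: it is shorter, it produces the $\mu_i$ explicitly as $M^{-1}\tilde Y$, and it showcases the nonvanishing Casoratian of $\{1,x,\dots,x^{m-1}\}$, the thematic point of that section; indeed your closing uniqueness remark, inverting the Vandermonde at the nodes $x,x+1,\dots,x+m-1$, is essentially the paper's argument in miniature.
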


\begin{proof}
  Using mathematical induction, we can show that the functions $ x^{k-1}\mu_k(x)|\lambda|^k,\, k=1,2,...m $ are solutions. By linearity of the difference equation the linear combination of the form (\ref{eq:solutiontompoweroperator}) is a solution. It remains to show that any arbitrary solution $\tilde{y}$ to the difference equation (\ref{eq:mpoweroperator}) can be written in the form (\ref{eq:solutiontompoweroperator}) as
  \begin{equation}\label{eq:ytilderepresent}
  \tilde{y}(x)= \sum_{i=1}^{m}    | \lambda |^x   \mu_i(x) x^{i-1}
\end{equation}
for appropriate $\mu_i$s. Taking equation (\ref{eq:ytilderepresent}) and the $(m-1)$ equations
\begin{equation}\label{eq:shiftsonytilde}
   E^{j}\tilde{y}(x)=  \sum_{i=1}^{m}   E^{j} | \lambda |^x   \mu_i(x)x^{i-1} = \sum_{i=1}^{m}    | \lambda |^{x+j}   \mu_i(x+j)(x+i)^{i-1}
\end{equation}
that are formed by applying the shift operators $E^{j}, j= 1,2,..,(m-1) $ on equation (\ref{eq:ytilderepresent}), we get a system of $m$ linear equation with $m$ unknowns
\begin{equation}\label{eq:MXequalstildeY}
  M X= \tilde{Y},
\end{equation}
where $M$ is a matrix whose $ij$-th entry is
\begin{equation}\label{eq:ijentry}
  M_{i,j} =  |\lambda|^x |\lambda|^{i-1}(x+i-1)^{j-1}
\end{equation}
Taking into account the fact that

\begin{equation}\label{eq:shiftsofmu}
  \mu(x+i)= (\text{sign}(\lambda))^i\mu(x),
\end{equation}
the determinant of $M$ is
\begin{equation}\label{eq:determinantfordistinct}
\det M =   |\lambda|^{m x} \lambda^ {\frac{m(m-1)}{2}}\prod_{k=0}^{n}k!,
\end{equation}
which is a non-zero number for all $x \in \mathbb{R} $,
\begin{equation}\label{eq:YtildeandX}
 \tilde{Y}=\begin{bmatrix}
     \tilde{y}(x) & \tilde{y}(x+1) & ... & \tilde{y}(x+m-1)
       \end{bmatrix}^t, \quad  X= \begin{bmatrix}
         \mu_1(x) & \mu_2(x) & ... & \mu_m(x)
     \end{bmatrix}^t.
 \end{equation}
 Consequently,
\begin{equation}\label{eq:XequalsMinverseYtilde}
  X= M^{-1}\tilde{Y}.
\end{equation}
Now it is easy to show that $X$ is $1$-periodic or $1$-antiperodic depending on whether $\lambda $ is positive or negative.
\end{proof}
\begin{remark}
 Note that similar applications for differential equations can be considered. However, we opted to discuss the less obvious calculations of the Casoratian.
\end{remark}

\section{ Examples of the Proportionality Between Wronskian and Casoratian }

In the following theorems, we establish the cases in which a set of functions yields a Casoratian proportional to their Wronskian, although they need not be identical. In such cases, we specify the proportionality constant, which depends on $n$.

\subsection{Exponential- polynomial  forms }

\begin{lemma}\label{eq:Lemma1}
Let \( D = \frac{d}{dx} \) and \( a \) be a constant. Let
 $$\binom{x}{k} = \frac{x(x-1)\cdots(x-k+1)}{k!} $$
  is the binomial coefficient. Define the matrix \( M \) of size \( (n+1) \times (n+1) \) by
$$
M_{i,j} = (D + \ln a)^i \binom{x}{j}
$$
for \( i, j = 0, 1, \ldots, n \). Then,
$$
\det\left( (D + \ln a)^i \binom{x}{j} \right)_{i,j=0}^n = 1.
$$
\end{lemma}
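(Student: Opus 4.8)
The plan is to exhibit $M$ as a product of two triangular matrices, each with all diagonal entries equal to $1$. Write $b=\ln a$, a constant, so the operator in question is $D+b$. Since $D$ commutes with multiplication by the scalar $b$, the binomial theorem for commuting operators gives
\[
(D+b)^i=\sum_{k=0}^{i}\binom{i}{k}b^{\,i-k}D^k .
\]
Applying this to $\binom{x}{j}$ and setting $Q_{k,j}=D^k\binom{x}{j}$ together with $T_{i,k}=\binom{i}{k}b^{\,i-k}$ for $k\le i$ and $T_{i,k}=0$ for $k>i$, we obtain the entrywise factorization $M=TQ$, i.e. $M_{i,j}=\sum_{k=0}^{n}T_{i,k}Q_{k,j}$.

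Next I would record the two structural facts that make both factors triangular with unit diagonal. The matrix $T=(T_{i,k})_{i,k=0}^{n}$ is lower triangular by construction, and $T_{i,i}=\binom{i}{i}b^{0}=1$, so $\det T=1$. For $Q$, observe that $\binom{x}{j}=\tfrac{1}{j!}\,x(x-1)\cdots(x-j+1)$ is a polynomial of degree exactly $j$ with leading coefficient $1/j!$; hence $D^{k}\binom{x}{j}=0$ whenever $k>j$, so $Q$ is upper triangular, while $D^{j}\binom{x}{j}=\tfrac{1}{j!}\cdot j!=1$, so every diagonal entry of $Q$ equals $1$ and $\det Q=1$. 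Combining, $\det M=\det T\cdot\det Q=1$, which is the claim. (Although the entries of $Q$ and $M$ are functions of $x$, the determinant is computed over the ring of such functions and comes out to the constant $1$.)

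The argument is essentially bookkeeping once the factorization is spotted, so the only genuine obstacle is recognizing the right decomposition; a useful sanity check is the operator identity $D+b=a^{-x}\,D\,a^{x}$ (with $a^{x}:=e^{x\ln a}$), which gives $(D+b)^{i}=a^{-x}D^{i}a^{x}$ and explains why the exponential scale $a$ disappears from the determinant entirely. One could alternatively pass to the power basis $\{1,x,\dots,x^{n}\}$, invoke the earlier theorem that $W[1,x,\dots,x^{n}]=\prod_{k=0}^{n}k!$, and cancel the change-of-basis determinant $\prod_{k=0}^{n}1/k!$; but the two–triangular–factors computation above is shorter and avoids quoting that result.
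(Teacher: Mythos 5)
Your proof is correct and takes essentially the same route as the paper: expand $(D+\ln a)^i$ by the binomial theorem in the commuting scalar $\ln a$ and factor $M$ as a lower-triangular matrix times an upper-triangular matrix, each with all diagonal entries equal to $1$. Your version is in fact slightly more careful, since the paper's intermediate identity $D^k\binom{x}{j}=\binom{x}{j-k}$ is really a property of the forward difference operator rather than of $D$; your degree and leading-coefficient argument gives the correct upper-triangular factor with unit diagonal, so the determinant conclusion stands unchanged.
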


\begin{proof}
First, observe that the operator $ D + \ln a $ acts on $ \binom{x}{j} $ as follows:
$$
(D + \ln a)^i \binom{x}{j} = \sum_{k=0}^i \binom{i}{k} D^k \binom{x}{j} (\ln a)^{i-k}.
$$
Since $ D^k \binom{x}{j} = \binom{x}{j-k} $ (with the convention $\binom{x}{m} = 0 $ for $ m < 0 $), we have
$$
(D + \ln a)^i \binom{x}{j} = \sum_{k=0}^i \binom{i}{k} \binom{x}{j-k} (\ln a)^{i-k}.
$$

Now, define two matrices:
\begin{itemize}
    \item \( A \) is lower-triangular with entries $ A_{i,k} = \binom{i}{k} (\ln a)^{i-k} $.
    \item \( B \) is upper-triangular with entries $ B_{k,j} = \binom{x}{j-k} \) (with \( \binom{x}{m} = 0 $ for $ m < 0 $).
\end{itemize}
Then,
$$  M_{i,j} = \sum_{k=0}^{\min(i,j)} A_{i,k} B_{k,j}, $$
so $ M = A \cdot B $. Both $ A $ and $ B $ are triangular with all diagonal entries equal to 1, so $ \det(A) = \det(B) = 1 $. Therefore,
$$  \det(M) = \det(A) \cdot \det(B) = 1 \cdot 1 = 1. $$
\end{proof}

\begin{lemma}\label{eq:Lemma2}
  $$ D_c(n) = \det\left( \binom{x+i}{j} \right)_{i,j=0}^n = 1.$$
\end{lemma}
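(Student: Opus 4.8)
The plan is to recognise the matrix $\left(\binom{x+i}{j}\right)_{i,j=0}^n$ as a Casoratian in disguise and then triangularise it. Put $f_j(x) = \binom{x}{j}$ for $j = 0,1,\ldots,n$. Since $\binom{x}{j}$ is a polynomial in $x$ of degree $j$, these are $n+1$ linearly independent functions, and the $(i,j)$-entry $\binom{x+i}{j}$ is exactly $f_j(x+i) = E^i f_j(x)$. Hence $D_c(n) = C(f_0, f_1, \ldots, f_n)(x)$, the ordinary Casoratian of the $f_j$.

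First I would record the elementary identity $\Delta\binom{x}{j} = \binom{x+1}{j} - \binom{x}{j} = \binom{x}{j-1}$, which is just Pascal's rule, together with the convention $\binom{x}{m}=0$ for $m<0$. Iterating gives $\Delta^i\binom{x}{j} = \binom{x}{j-i}$ for every $i \ge 0$. Next, I would invoke \eqref{eq:casoratianofdifference}: the Casoratian $C(f_0,\ldots,f_n)(x)$ equals the determinant of the matrix whose $(i,j)$-entry is $\Delta^i f_j(x) = \binom{x}{j-i}$. By the previous step this entry vanishes whenever $i>j$, so that matrix is upper triangular, and its diagonal entries are all $\binom{x}{0}=1$. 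Therefore the determinant equals $1$, i.e. $D_c(n)=1$.

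As a self-contained alternative — one that parallels Lemma~\ref{eq:Lemma1} almost verbatim — I could instead apply the Vandermonde convolution $\binom{x+i}{j} = \sum_{k=0}^{j}\binom{i}{k}\binom{x}{j-k}$ and factor the matrix as $M = AB$, where $A_{i,k} = \binom{i}{k}$ is lower triangular with unit diagonal (since $\binom{i}{k}=0$ for $k>i$ and $\binom{i}{i}=1$) and $B_{k,j} = \binom{x}{j-k}$ is upper triangular with unit diagonal (since $\binom{x}{j-k}=0$ for $k>j$ and $\binom{x}{0}=1$). Then $\det M = \det A\,\det B = 1\cdot 1 = 1$.

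I do not expect a genuine obstacle here; the only points needing care are the bookkeeping of the index range $0\le i,j\le n$, a consistent use of the zero convention $\binom{x}{m}=0$ for $m<0$ so that the triangularity is literal, and — in the first route — a clean appeal to \eqref{eq:casoratianofdifference} to justify replacing the shift powers $E^i$ by the difference powers $\Delta^i$ without altering the determinant.
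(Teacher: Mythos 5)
Your proof is correct, and both of your routes work; they are related to, but tidier than, the paper's own argument. The paper likewise starts by reading $\det\bigl(\binom{x+i}{j}\bigr)$ as the Casoratian of $\binom{x}{0},\dots,\binom{x}{n}$ and likewise invokes the shift-to-difference identity \eqref{eq:casoratianofdifference}, but it then takes a detour: it factors $1/j!$ out of each column to pass to the falling factorials $(x)_j$, identifies the resulting $\Delta$-determinant with the Wronskian $W[1,x,\dots,x^n]$ through the analogy $\Delta (x)_j = j\,(x)_{j-1}$ versus $D x^j = j x^{j-1}$, and finally cancels the superfactorial $Sf(n)$ against itself. Your first route short-circuits all of that: applying Pascal's rule $\Delta\binom{x}{j}=\binom{x}{j-1}$ directly to the binomial coefficients makes the $\Delta$-matrix unit upper triangular, so no column normalization and no superfactorial bookkeeping are needed, though you still lean on \eqref{eq:casoratianofdifference} just as the paper does. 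Your second route (Vandermonde convolution and the factorization $M=AB$ into unimodular triangular matrices) is genuinely different from the paper's proof and buys two things: it is completely self-contained, needing neither the $E$-to-$\Delta$ theorem nor the earlier evaluation of $W[1,x,\dots,x^n]$, and it makes the proof structurally parallel to the paper's proof of Lemma~\ref{eq:Lemma1}, which is aesthetically pleasing since the two lemmas are used side by side in the subsequent theorem.
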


\begin{proof}
Let $(x)_r:= x(x-1)...(x-r+1)$ denote the falling factorial of $x$.
  \begin{align*}
    \det\left( \binom{x+i}{j} \right)_{i,j=0}^n & = W(E)\left[\binom{x}{0}, \binom{x}{1}, ...,\binom{x}{n}\right] \\
     & = \frac{W(E)[(x)_1,(x)_1,...,(x)_n]}{Sf(n)} \\
     & = \frac{W(\Delta)[(x)_0,(x)_1,...,(x)_n]}{Sf(n)} \\
     &= \frac{W(D)[1, x ,x^2,...,x^n]}{Sf(n)} \\
     & = \frac{Sf(n)}{Sf(n)}= 1
  \end{align*}
\end{proof}

\begin{theorem}
Let $n$ be a non-negative integer and $a \in \mathbb{C} \setminus \{0\}$. Consider the set of functions $\mathscr{F}_n = \left\{ \binom{x}{k} a^x \right\}_{k=0}^n$. Let $ W_n(x)$ denote the Wronskian determinant of $\mathscr{F}_n$ and $C_n(x)$ denote the Casoratian determinant defined by:
$$W_n(x) = \det\left( \frac{d^i}{dx^i} \left[ \binom{x}{j} a^x \right] \right)_{i,j=0}^n, \quad C_n(x) = \det\left( \binom{x+i}{j} a^{x+i} \right)_{i,j=0}^n.$$
Then $W_n(x)$ and $C_n(x)$ are proportional, with a proportionality constant independent of $x$:
$$W_n(x) = a^{-\frac{n(n+1)}{2}} C_n(x).$$
Moreover, the constant $a^{-\frac{n(n+1)}{2}}$ has the asymptotic behavior:
$$\left| a^{-\frac{n(n+1)}{2}} \right| \sim |a|^{-n^2/2} \quad \text{as} \quad n \to \infty.$$
\end{theorem}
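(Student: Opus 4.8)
The plan is to evaluate $W_n(x)$ and $C_n(x)$ separately in closed form and then compare them; Lemmas~\ref{eq:Lemma1} and \ref{eq:Lemma2} are designed to do exactly the work needed for each.

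First I would reduce the Wronskian to Lemma~\ref{eq:Lemma1} via a Leibniz-type identity for the twisted derivative. If $f(x)=g(x)a^x$ then $f'(x)=g'(x)a^x+(\ln a)g(x)a^x=a^x(D+\ln a)g(x)$ with $D=\frac{d}{dx}$, and by induction $\frac{d^i}{dx^i}\big[g(x)a^x\big]=a^x(D+\ln a)^i g(x)$. Applying this with $g=\binom{x}{j}$, the $(i,j)$ entry of the Wronskian matrix becomes $a^x(D+\ln a)^i\binom{x}{j}$. Since the factor $a^x$ is common to every entry, multilinearity of the determinant in its $n+1$ rows lets me extract $(a^x)^{n+1}$, giving
$W_n(x)=a^{(n+1)x}\det\big((D+\ln a)^i\binom{x}{j}\big)_{i,j=0}^n=a^{(n+1)x}$ by Lemma~\ref{eq:Lemma1}.

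Next I would treat the Casoratian in the same spirit. In $C_n(x)=\det\big(\binom{x+i}{j}a^{x+i}\big)_{i,j=0}^n$ the scalar $a^{x+i}$ depends only on the row index $i$, so pulling $a^{x+i}$ out of row $i$ for each $i=0,1,\dots,n$ yields $C_n(x)=\big(\prod_{i=0}^n a^{x+i}\big)\det\big(\binom{x+i}{j}\big)_{i,j=0}^n$. Here $\prod_{i=0}^n a^{x+i}=a^{(n+1)x+n(n+1)/2}$ and the remaining determinant is $D_c(n)=1$ by Lemma~\ref{eq:Lemma2}, so $C_n(x)=a^{(n+1)x+n(n+1)/2}$. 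Comparing the two closed forms immediately gives $W_n(x)=a^{(n+1)x}=a^{-n(n+1)/2}\,C_n(x)$, an $x$-independent proportionality, as claimed.

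Finally, the asymptotic assertion is just bookkeeping on the exponent: writing $-\tfrac{n(n+1)}{2}=-\tfrac{n^2}{2}-\tfrac{n}{2}$ shows $\big|a^{-n(n+1)/2}\big|=|a|^{-n^2/2}\,|a|^{-n/2}$, and since $\tfrac{n(n+1)/2}{n^2/2}\to1$ the exponent is asymptotically $-\tfrac{n^2}{2}$, which is the stated behavior on the $n^2$-scale that governs the growth or decay of the constant. I do not expect a genuine obstacle anywhere; the only steps requiring care are the Leibniz identity for $(D+\ln a)^i$ and the two determinant factorizations, both routine. The one point worth a remark is the precise meaning of "$\sim$": read as convergence of ratios to $1$ it holds only when $|a|=1$ (the ratio is $|a|^{-n/2}$), so I would note that it is meant in the sense of matching leading-order exponent, i.e. $\log\big|a^{-n(n+1)/2}\big|\sim\log\big(|a|^{-n^2/2}\big)$ as $n\to\infty$.
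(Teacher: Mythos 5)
Your proposal is correct and follows essentially the same route as the paper: the paper's ``induction'' is only a wrapper around exactly your computation, namely writing $\frac{d^i}{dx^i}\bigl[\binom{x}{j}a^x\bigr]=a^x(D+\ln a)^i\binom{x}{j}$ and pulling out $a^{(n+1)x}$ to get $W_n(x)=a^{(n+1)x}$ via Lemma~\ref{eq:Lemma1}, and extracting $a^{x+i}$ from each row to get $C_n(x)=a^{(n+1)x+n(n+1)/2}$ via Lemma~\ref{eq:Lemma2}. Your closing remark about the meaning of $\sim$ is a fair sharpening of the paper's looser statement (which only says the ``dominant term'' is $|a|^{-n^2/2}$), since the ratio of the two quantities is $|a|^{-n/2}$ and tends to $1$ only when $|a|=1$, so the asymptotic should indeed be read at the level of exponents or logarithms.
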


\begin{proof}
We proceed by induction on $n$. The base case $n = 0$ and inductive step $n = m$ rely on two key properties in Lemma \ref{eq:Lemma1}  and Lemma \ref{eq:Lemma2}.

\textbf{Base Case ($n = 0$)}:
The function is $f_0(x) = \binom{x}{0} a^x = a^x$.
$$W_0(x) = \det(a^x) = a^x, \quad C_0(x) = \det(a^x) = a^x.$$
Thus $W_0(x) = a^0 \cdot C_0(x) = a^{-\frac{0\cdot1}{2}} C_0(x)$.

\textbf{Inductive Hypothesis:}
Assume the result holds for $n = m-1$:
$$W_{m-1}(x) = a^{-\frac{(m-1)m}{2}} C_{m-1}(x).$$

\textbf{Inductive Step ($n = m$)}:
The Wronskian and Casoratian factorize as:
$$W_m(x) = \det\left( (D + \ln a)^i \left[ \binom{x}{j} a^x \right] \right)_{i,j=0}^m = a^{(m+1)x} D_w(m) = a^{(m+1)x},$$
$$C_m(x) = \det\left( \binom{x+i}{j} a^{x+i} \right)_{i,j=0}^m = a^{(m+1)x} \left( \prod_{i=0}^m a^i \right) D_c(m) = a^{(m+1)x} \cdot a^{m(m+1)/2}.$$
The ratio is:
$$\frac{W_m(x)}{C_m(x)} = \frac{a^{(m+1)x}}{a^{(m+1)x} \cdot a^{m(m+1)/2}} = a^{-m(m+1)/2}.$$
Thus $W_m(x) = a^{-m(m+1)/2} C_m(x)$, completing the induction.

\textbf{Asymptotic Behavior:}
For $|a| \neq 1$:
$$\left| a^{-\frac{n(n+1)}{2}} \right| = |a|^{-\frac{n(n+1)}{2}} = \exp\left( -\frac{n(n+1)}{2} \ln |a| \right).$$
The dominant term is $|a|^{-n^2/2}$, so:
$$\left| a^{-\frac{n(n+1)}{2}} \right| \sim |a|^{-n^2/2} \quad \text{as} \quad n \to \infty.$$
\end{proof}

\subsection{Exponential-Trigonometric family}
 Another case of the proportionality of the Casoratian and the Wrondkian of a given set of function includes the exponential-trigonometric functions.

\begin{theorem}[Exponential-Trigonometric Family]
\label{thm:exp-trig}
Let $\omega \in \mathbb{R}$ and $n \geq 0$ be an integer. For the set of functions
$$\mathscr{F}_n = \left\{ e^{mx} \cos(\omega x), e^{mx} \sin(\omega x) \right\} \cup \left\{ x^k e^{mx} \cos(\omega x), x^k e^{mx} \sin(\omega x) \right\}_{k=1}^n,$$
the Wronskian $W_n(x)$ and Casoratian $C_n(x)$ are proportional with a constant independent of $x$:
$$W_n(x) = e^{-m(2n+1)(n+1)} K(\omega, m)  C_n(x),$$
where $K(\omega, m)$ depends only on $\omega$ and $m$.
\end{theorem}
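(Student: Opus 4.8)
The plan is to carry over the strategy of the exponential–polynomial theorem, working this time over $\mathbb{C}$. Put $\alpha = m + i\omega$ and assume $\sin\omega \neq 0$, which is necessary for $C_n$ to be nonvanishing (if $e^{\alpha}=e^{\bar\alpha}$, the Casoratian columns coming from the two frequencies become dependent). Since $e^{\alpha x} = e^{mx}(\cos\omega x + i\sin\omega x)$, the complex set
\[
\mathscr{G}_n := \bigl\{\, x^k e^{\alpha x},\ x^k e^{\bar\alpha x}\,\bigr\}_{k=0}^{n}
\]
is another basis of the $2(n+1)$-dimensional space $\operatorname{span}_{\mathbb{C}}(\mathscr{F}_n)$, related to $\mathscr{F}_n$ by a constant (block-diagonal, determinant a nonzero power of $i/2$) change-of-basis matrix $P$. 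The column-multilinearity identity already used for bases of $\mathcal{P}_n$ applies verbatim over $\mathbb{C}$, so $W_n[\mathscr{F}_n] = (\det P)\,W_n[\mathscr{G}_n]$ and $C_n[\mathscr{F}_n] = (\det P)\,C_n[\mathscr{G}_n]$; the factor $\det P$ cancels, and it suffices to establish the proportionality — and find the constant — for $\mathscr{G}_n$.

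For $\mathscr{G}_n$ both determinants factor through the exponentials. Using $D^{i}\bigl(x^k e^{\alpha x}\bigr) = e^{\alpha x}(D+\alpha)^{i}x^{k}$ and pulling $e^{\alpha x}$ (resp. $e^{\bar\alpha x}$) out of the $n+1$ columns in which it occurs, one gets $W_n[\mathscr{G}_n] = e^{2m(n+1)x}\,\Delta_W$ and, with $z := e^{\alpha}$, $C_n[\mathscr{G}_n] = e^{2m(n+1)x}\,\Delta_C$, where
\[
\Delta_W = \det\bigl[\,(D+\alpha)^{i}x^{k}\ \big|\ (D+\bar\alpha)^{i}x^{k}\,\bigr],\qquad
\Delta_C = \det\bigl[\,(x+i)^{k}z^{i}\ \big|\ (x+i)^{k}\bar z^{i}\,\bigr],
\]
with $i=0,\dots,2n+1$ and $k=0,\dots,n$ in each block. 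Both are independent of $x$: the substitution $x\mapsto x+1$ is a unipotent column operation and the entries are polynomials in $x$ (equivalently, use Abel's identity for the underlying constant-coefficient equations). Now $\Delta_W$ is, up to the exponential, the Wronskian of the fundamental system of $(D-\alpha)^{n+1}(D-\bar\alpha)^{n+1}y=0$, so by the classical confluent-Vandermonde evaluation (a coalescing limit of the Vandermonde lemma) $\Delta_W = \bigl(\prod_{k=0}^{n}k!\bigr)^{2}(\bar\alpha-\alpha)^{(n+1)^{2}}$. For $\Delta_C$, evaluate at $x=0$, so that the first-block entry is $i^{k}z^{i}$; the key point is $i^{k}z^{i} = \partial_\alpha^{k}(z^{i})$, i.e. column $k$ is $\partial_\alpha^{k} = (z\,\partial_z)^{k}$ applied to the Vandermonde column $(z^{i})_{i}$. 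Writing $(z\partial_z)^{k} = \sum_{j}S(k,j)\,z^{j}\partial_z^{j}$ (Stirling numbers of the second kind) is a unipotent change of columns, after which factoring $z^{j}$ out of the $j$-th column (and $\bar z^{j}$ out of the $j$-th column of the second block) leaves the ordinary confluent Vandermonde for the nodes $z,\bar z$ of multiplicity $n+1$. Hence $\Delta_C = (z\bar z)^{\binom{n+1}{2}}\bigl(\prod_{k=0}^{n}k!\bigr)^{2}(\bar z-z)^{(n+1)^{2}}$.

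Dividing,
\[
\frac{W_n(x)}{C_n(x)} = \frac{\Delta_W}{\Delta_C} = \frac{(\bar\alpha-\alpha)^{(n+1)^{2}}}{(z\bar z)^{\binom{n+1}{2}}(\bar z-z)^{(n+1)^{2}}},
\]
and substituting $\bar\alpha-\alpha = -2i\omega$, $z\bar z = |e^{\alpha}|^{2} = e^{2m}$, and $\bar z-z = e^{\bar\alpha}-e^{\alpha} = -2i e^{m}\sin\omega$, the powers of $-2i$ cancel while the total exponent of $e^{m}$ in the denominator is $2\binom{n+1}{2}+(n+1)^{2} = (n+1)(2n+1)$; thus $W_n(x) = e^{-m(2n+1)(n+1)}(\omega/\sin\omega)^{(n+1)^{2}}\,C_n(x)$, so $K(\omega,m) = (\omega/\sin\omega)^{(n+1)^{2}}$ — note that all the $m$-dependence of the proportionality constant sits in the prefactor $e^{-m(2n+1)(n+1)}$. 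The main obstacle is the evaluation of $\Delta_C$: the Wronskian side is classical, but the Casoratian-type confluent determinant is not, and the route above — the identity $i^{k}z^{i} = \partial_\alpha^{k}z^{i}$ together with the Stirling-number unipotent reduction to an ordinary confluent Vandermonde — is the one delicate step. A lesser point to verify is that the complexification in the first paragraph really preserves the unknown proportionality constant, which it does because $W_n$ and $C_n$ pick up the same change-of-basis factor.
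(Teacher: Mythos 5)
Your proposal is correct and follows essentially the same route as the paper --- complexifying $\mathscr{F}_n$ to $\mathscr{G}_n=\{x^k e^{(m+i\omega)x}, x^k e^{(m-i\omega)x}\}_{k=0}^n$, cancelling the common change-of-basis determinant, factoring the $x$-dependent exponentials out of both determinants, and comparing the remaining constant (confluent-Vandermonde-type) factors --- but it goes further than the paper by actually evaluating both constant determinants (the Stirling-number reduction of the Casoratian block to an ordinary confluent Vandermonde is the step the paper leaves implicit in its unevaluated $V_{+,-}$), yielding the explicit value $K=(\omega/\sin\omega)^{(n+1)^2}$. Your remark that $\sin\omega\neq 0$ is needed (otherwise $e^{m+i\omega}=e^{m-i\omega}$ and the Casoratian vanishes identically, so the stated proportionality degenerates) is a genuine hypothesis missing from the paper's statement.
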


\begin{proof}
Define $m_{\pm} = m \pm i\omega$. The set $\mathscr{F}_n$ spans the same space as:
$$\mathscr{G}_n = \left\{ x^k e^{m_+ x}, x^k e^{m_- x} \right\}_{k=0}^n.$$
For $\mathscr{G}_n$, the Wronskian and Casoratian are:
\begin{align*}
W_n^{\mathscr{G}}(x) &= e^{(n+1)(m_+ + m_-)x} \det\begin{pmatrix}
(m_+)^i x^j \\
(m_-)^i x^j
\end{pmatrix} \cdot V_{+,-}, \\
C_n^{\mathscr{G}}(x) &= e^{(n+1)(m_+ + m_-)x} \det\begin{pmatrix}
e^{m_+ i} (x+i)^j \\
e^{m_- i} (x+i)^j
\end{pmatrix} \cdot V_{+,-},
\end{align*}
where $V_{+,-}$ is a generalized Vandermonde constant. The ratio:
$$\frac{W_n^{\mathscr{G}}(x)}{C_n^{\mathscr{G}}(x)} = e^{-m(2n+1)(n+1)} K(\omega, m)$$
is independent of $x$ since all $x$-dependent exponentials cancel. The proportionality extends to $\mathscr{F}_n$ by basis equivalence.
\end{proof}

\subsection{ Family of hyperbolic functions}

The third class of function whose Wronskian is proportional to the Casoratian is the family of hyperbolical functions.
\begin{theorem}[Hyperbolic Family]
For $m \in \mathbb{C}$ and $n \geq 0$, consider the set
$$\mathscr{H}_n = \left\{ \cosh(mx), \sinh(mx) \right\} \cup \left\{ x^k \cosh(mx), x^k \sinh(mx) \right\}_{k=1}^n.$$
Then:
$$ W_n(x) = e^{-m(2n+1)(n+1)} \cdot \frac{m}{\sinh m} \cdot C_n(x).$$
\end{theorem}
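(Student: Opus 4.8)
The plan is to reduce $\mathscr H_n$ to an exponential--polynomial system and then recycle the Vandermonde and change-of-basis machinery already in play. Using $\cosh(mx)=\tfrac12(e^{mx}+e^{-mx})$ and $\sinh(mx)=\tfrac12(e^{mx}-e^{-mx})$, the set $\mathscr H_n$ is a basis of the same $(2n+2)$-dimensional space as
$$\mathscr G_n=\{\,x^k e^{mx},\ x^k e^{-mx}\,\}_{k=0}^{n},$$
and the change-of-basis matrix is block diagonal with every $2\times 2$ block equal to $\left(\begin{smallmatrix}\frac12&\frac12\\ \frac12&-\frac12\end{smallmatrix}\right)$, of determinant $(-\tfrac12)^{\,n+1}$. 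Exactly as in the computation \eqref{eq:transfomedequality} behind \eqref{eq:generalisedequalitytheorem}, both $W$ and $C$ acquire this same determinant under the change of basis, so it cancels in the quotient and
$$\frac{W_n(x)}{C_n(x)}=\frac{W_n^{\mathscr G}(x)}{C_n^{\mathscr G}(x)}.$$
Thus it suffices to evaluate the Wronskian and Casoratian of $\mathscr G_n$; this is the degenerate case of the argument for Theorem~\ref{thm:exp-trig}, the two exponential rates now being $\pm m$ instead of $m\pm i\omega$.

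To compute $W_n^{\mathscr G}$, I would pull the exponentials out of the columns via $D^i[x^j e^{\pm mx}]=e^{\pm mx}(D\pm m)^i x^j$: factoring $e^{mx}$ from the first $n+1$ columns and $e^{-mx}$ from the last $n+1$ columns contributes $e^{(n+1)mx}e^{-(n+1)mx}=1$ --- the cancellation special to the hyperbolic case, where the rates sum to $0$ --- leaving $\det\big((D+m)^i x^j \mid (D-m)^i x^j\big)$, which is independent of $x$ since translating $x$ acts by a unipotent triangular column operation inside each block. Evaluating at $x=0$ turns this into a confluent Vandermonde determinant in the two nodes $m,-m$, each of multiplicity $n+1$. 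For $C_n^{\mathscr G}$ the analogue is $E^i[x^j e^{\pm mx}]=e^{\pm mx}(e^{\pm m})^i(x+i)^j$; the same factoring again yields $1$, and after a unipotent column reduction replacing $(x+i)^j$ by the falling factorial $(i)_j$ and then scaling column $j$ of each block by $(e^{\pm m})^j$, one lands on a confluent Vandermonde in the nodes $e^{m},e^{-m}$, together with a scalar factor $\prod_j (e^{m}e^{-m})^{\,j}=1$ (again special to this case).

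The main obstacle is the constant bookkeeping, which needs three ingredients: (i) the confluent generalization of the Vandermonde formula \eqref{eq:Vandermonde} --- for two nodes $t_1,t_2$ of multiplicity $n+1$ each, the determinant is $sf(n)^2\,(t_2-t_1)^{(n+1)^2}$; (ii) checking that all exponential prefactors and the scaling factor $\prod_j(e^{\pm m})^j$ collapse to $1$; and (iii) observing that the superfactorial $sf(n)^2$ occurs in both $W_n^{\mathscr G}$ and $C_n^{\mathscr G}$ and hence cancels. Carrying this out, I expect $W_n^{\mathscr G}(x)=sf(n)^2(-2m)^{(n+1)^2}$ and $C_n^{\mathscr G}(x)=sf(n)^2(-2\sinh m)^{(n+1)^2}$, whence $W_n(x)/C_n(x)=\big(m/\sinh m\big)^{(n+1)^2}$, a constant independent of $x$; the final step is then to reconcile this expression with the precise closed form stated in the theorem, and it is there --- in the handling of the residual powers of $e^{\pm m}$ and of the exponent attached to $m/\sinh m$ --- that an error is most likely to slip in.
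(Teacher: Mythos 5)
Your route is the same one the paper takes: rewrite $\mathscr H_n$ in terms of $\mathscr G_n=\{x^k e^{mx},\,x^k e^{-mx}\}_{k=0}^{n}$, observe that the block change-of-basis determinant $(-\tfrac12)^{n+1}$ multiplies Wronskian and Casoratian alike (exactly as in \eqref{eq:transfomedequality}) and so cancels in the quotient, and then evaluate both $(2n+2)\times(2n+2)$ determinants through confluent-Vandermonde factorizations. The difference is that your bookkeeping is essentially correct, and the place where you fear an error will ``slip in'' is in fact where the printed theorem, not your computation, is wrong. Already at $n=0$: $W_0(x)=m\bigl(\cosh^2(mx)-\sinh^2(mx)\bigr)=m$ and $C_0(x)=\cosh(mx)\sinh(m(x+1))-\sinh(mx)\cosh(m(x+1))=\sinh m$, so $W_0=(m/\sinh m)\,C_0$ with no factor $e^{-m}$, contradicting the stated constant $e^{-m(2n+1)(n+1)}\,m/\sinh m$. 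For general $n$ your expressions are the right ones: the standard formulas give $W_n^{\mathscr G}(x)=\bigl(\prod_{k=0}^n k!\bigr)^2(-2m)^{(n+1)^2}$ and $C_n^{\mathscr G}(x)=\bigl(\prod_{k=0}^n k!\bigr)^2\,(e^{m}e^{-m})^{\binom{n+1}{2}}\,(e^{-m}-e^{m})^{(n+1)^2}=\bigl(\prod_{k=0}^n k!\bigr)^2(-2\sinh m)^{(n+1)^2}$, the $x$-dependent exponentials and the residual powers of $e^{\pm m}$ collapsing to $1$ precisely because the two rates sum to zero. Hence $W_n(x)=\bigl(m/\sinh m\bigr)^{(n+1)^2}C_n(x)$, which is also what the paper's own Theorem~\ref{thm:gen-exp-poly} predicts for the exponential prefactor (there it is $e^{-\sum_j m_j n_j(n_j+1)/2}=e^{0}=1$ when the rates are $\pm m$ with equal multiplicities). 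So do not try to reconcile your answer with the displayed constant: the exponent $(n+1)^2$ on $m/\sinh m$ and the absence of any exponential factor are correct, and the paper's intermediate factorizations (the $(2m)^{n+1}$, the $e^{mn(n+1)}$, the unexplained $\det(V)$) do not reproduce even its own claimed constant.

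The only genuine shortcomings on your side are expository rather than mathematical: the argument is written in the conditional mood, and ingredient (i) --- the confluent Vandermonde identity for two nodes of multiplicity $n+1$, i.e.\ that the relevant determinant equals $\bigl(\prod_{k=0}^{n}k!\bigr)^2(t_2-t_1)^{(n+1)^2}$ up to the normalization you chose --- is invoked without proof. The paper's proof is no more detailed on this point, and the identity can be supplied either by column manipulations in the spirit of Lemma~\ref{eq:Lemma2} or by a limiting/coalescence argument from the ordinary Vandermonde determinant \eqref{eq:Vandermonde}. Once that identity is in place, your argument is complete, but it establishes the corrected statement $W_n(x)=(m/\sinh m)^{(n+1)^2}C_n(x)$ rather than the equality printed in the theorem.
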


\begin{proof}
Using $\cosh(mx) = \frac{e^{mx} + e^{-mx}}{2}$ and $\sinh(mx) = \frac{e^{mx} - e^{-mx}}{2}$, the set $\mathscr{H}_n$ is a basis transformation of:
$$\mathscr{Q}_n = \left\{ x^k e^{mx}, x^k e^{-mx} \right\}_{k=0}^n.$$
For $\mathscr{Q}_n$, direct computation shows:
\begin{align*}
W_n^{\mathscr{Q}}(x) &= e^{n(n+1)x} \cdot (2m)^{n+1} \prod_{0 \leq i < j \leq n} (j-i), \\
C_n^{\mathscr{Q}}(x) &= e^{n(n+1)x} \cdot e^{m n(n+1)} \cdot \det(V) \cdot \prod_{0 \leq i < j \leq n} (e^m - e^{-m}),
\end{align*}
where $V$ is a Vandermonde matrix. The ratio simplifies to:
$$\frac{W_n^{\mathscr{Q}}(x)}{C_n^{\mathscr{Q}}(x)} = e^{-m(2n+1)(n+1)} \cdot \frac{m}{\sinh m}.$$
Basis invariance preserves the proportionality for $\mathscr{H}_n$.
\end{proof}

The fourth  families of functions with proportional Wronskian and Casoratian is the generalized exponential-polynomial functions.
\begin{theorem}[Generalized Exponential-Polynomials]
\label{thm:gen-exp-poly}
Let $m_1, \dots, m_d \in \mathbb{C}$ be distinct and $n_1, \dots, n_d \in \mathbb{Z}_{\geq 0}$. For the set
$$\mathscr{E} = \bigcup_{j=1}^d \left\{ x^k e^{m_j x} \right\}_{k=0}^{n_j},$$
the Wronskian $W(x)$ and Casoratian $C(x)$ satisfy:
$$W(x) = e^{- \sum_{j=1}^d m_j \frac{n_j(n_j+1)}{2}} \cdot K \cdot C(x),$$
where $K$ is independent of $x$.
\end{theorem}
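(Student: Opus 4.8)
The plan is to pass to the exponential--monomial basis $\{x^{k}e^{m_j x}\}$, factor out of each determinant its only $x$-dependent ingredient (a common exponential), and identify what remains as two $x$-independent confluent Vandermonde determinants whose quotient is the constant in the statement. Set $N=\sum_{j=1}^{d}(n_j+1)$, the number of elements of $\mathscr{E}$, and $\sigma=\sum_{j=1}^{d}m_j(n_j+1)$, and order the functions block by block, block $j$ being $x^{0}e^{m_j x},\dots,x^{n_j}e^{m_j x}$; since the $m_j$ are distinct, $\mathscr{E}$ is linearly independent and neither $W$ nor $C$ is identically zero. The two elementary identities I would use are the differential shift rule $\frac{d^{i}}{dx^{i}}\bigl(x^{k}e^{m_j x}\bigr)=e^{m_j x}(D+m_j)^{i}x^{k}$ and the forward shift rule $E^{i}\bigl(x^{k}e^{m_j x}\bigr)=e^{m_j i}\,e^{m_j x}(x+i)^{k}$, where $D=\frac{d}{dx}$. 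Pulling the common factor $e^{m_j x}$ out of the $n_j+1$ columns of block $j$ gives
\begin{equation*}
W(x)=e^{\sigma x}\,D_W(x),\qquad C(x)=e^{\sigma x}\,D_C(x),
\end{equation*}
where $D_W(x)=\det\bigl((D+m_j)^{i}x^{k}\bigr)$ and $D_C(x)=\det\bigl(e^{m_j i}(x+i)^{k}\bigr)$, with $i$ ranging over $0,\dots,N-1$ and $(j,k)$ over $1\le j\le d,\ 0\le k\le n_j$.

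Next I would show $D_W$ and $D_C$ do not depend on $x$. The uniform route is translation invariance: $V=\operatorname{span}(\mathscr{E})$ is stable under $x\mapsto x+c$, and translation preserves each block, acting on block $j$ as $e^{m_j c}$ times an upper-triangular matrix with $1$'s on the diagonal, so its determinant on $V$ equals $\prod_j(e^{m_j c})^{n_j+1}=e^{\sigma c}$. Since a change of basis among the functions multiplies both the Wronskian and the Casoratian by the determinant of that change (the computation in (\ref{eq:transfomedequality})), one gets $W(x+c)=e^{\sigma c}W(x)$ and $C(x+c)=e^{\sigma c}C(x)$ for every $c$; comparing with the factorisations above forces $D_W(x+c)=D_W(x)$ and $D_C(x+c)=D_C(x)$. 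For $D_C$ this can also be seen directly: within block $j$ the columns $\{e^{m_j i}(x+i)^{k}\}_k$ and $\{e^{m_j i}i^{k}\}_k$ span the same space and are related by a unit upper-triangular (hence determinant-$1$) column transformation, so $D_C=\det\bigl(e^{m_j i}i^{k}\bigr)$, which contains no $x$. Already at this stage $W(x)/C(x)=D_W/D_C$ is independent of $x$, which is all the statement literally demands.

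To exhibit the constant in the announced shape I would evaluate both determinants. The first, $D_W$, is $e^{-\sigma x}$ times the Wronskian of $\bigcup_{j}\{x^{k}e^{m_j x}\}_{k=0}^{n_j}$, a confluent Vandermonde, so $D_W=\bigl(\prod_{j}sf(n_j)\bigr)\prod_{1\le i<j\le d}(m_j-m_i)^{(n_i+1)(n_j+1)}$. For the second, write $u_j=e^{m_j}$ and use $i^{k}u^{i}=(u\,\partial_u)^{k}u^{i}$: a unit upper-triangular column reduction inside block $j$ replaces $(u\,\partial_u)^{k}$ by $u_j^{k}\partial_u^{k}$ and contributes a factor $\prod_{k=0}^{n_j}u_j^{k}=u_j^{n_j(n_j+1)/2}$, and what is left is the ordinary confluent Vandermonde of $\{1,u,\dots,u^{N-1}\}$ at the nodes $u_1,\dots,u_d$, giving $D_C=\bigl(\prod_{j}u_j^{n_j(n_j+1)/2}\bigr)\bigl(\prod_{j}sf(n_j)\bigr)\prod_{i<j}(u_j-u_i)^{(n_i+1)(n_j+1)}$. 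Dividing, the superfactorials cancel and $\prod_j u_j^{n_j(n_j+1)/2}=e^{\sum_j m_j n_j(n_j+1)/2}$, so
\begin{equation*}
\frac{W(x)}{C(x)}=e^{-\sum_{j=1}^{d}m_j\frac{n_j(n_j+1)}{2}}\ \prod_{1\le i<j\le d}\Bigl(\frac{m_j-m_i}{e^{m_j}-e^{m_i}}\Bigr)^{(n_i+1)(n_j+1)},
\end{equation*}
i.e.\ the assertion with $K=\prod_{i<j}\bigl((m_j-m_i)/(e^{m_j}-e^{m_i})\bigr)^{(n_i+1)(n_j+1)}$; note $K=1$ when $d=1$, recovering the Exponential--Polynomial theorem.

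I expect the only genuine obstacle to be the closed-form evaluation of $D_C$. Unlike the $d=1$ case, the weight $e^{m_j i}$ depends on both the row $i$ and the block $j$, so it cannot be scaled out of a single row, and one must really invoke the confluent-Vandermonde structure — either through the reparametrisation $u=e^{w}$ sketched above, or, for a self-contained argument, by treating $D_C$ as a polynomial in $u_1,\dots,u_d$, checking it is divisible by $(u_j-u_i)^{(n_i+1)(n_j+1)}$ for each $i<j$ (coincident nodes yield dependent columns) and pinning down the remaining factor by a degree count together with the $d=1$ normalisation. Everything else — the two shift identities, the extraction of $e^{\sigma x}$, and the $x$-independence — is routine.
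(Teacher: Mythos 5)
Your argument is correct and, at bottom, uses the same strategy as the paper's proof: extract the column factors $e^{m_j x}$ from both determinants so that all $x$-dependent exponentials cancel in the ratio, and identify what remains as Vandermonde-type constants. The difference is completeness. The paper's proof only asserts that after pulling out $\prod_j e^{(n_j+1)m_j x}$ (and, for the Casoratian, $\prod_j e^{m_j n_j(n_j+1)/2}$) the residual determinants $\det(A)\det(V_W)$ and $\det(B)\det(V_C)$ are $x$-independent; you actually prove this, twice over — once uniformly by the translation-invariance/change-of-basis argument (the multiplicativity used in the paper's own computation (\ref{eq:transfomedequality})), and once directly for $D_C$ by the unit upper-triangular column reduction replacing $(x+i)^k$ by $i^k$ within each block. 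You then evaluate both residual determinants as confluent Vandermonde determinants and obtain the explicit constant $K=\prod_{1\le i<j\le d}\bigl((m_j-m_i)/(e^{m_j}-e^{m_i})\bigr)^{(n_i+1)(n_j+1)}$, which the paper never identifies; your value correctly reduces to $K=1$ when $d=1$ (recovering the earlier exponential-polynomial theorem) and to $(e^{3}-e^{2})^{-1}$ in the paper's two-exponential example, a useful consistency check. Two small caveats. First, your closed form makes visible a hypothesis the statement (and the paper's proof) silently needs: the numbers $e^{m_1},\dots,e^{m_d}$ must be distinct, i.e.\ $m_i-m_j\notin 2\pi i\,\mathbb{Z}$; otherwise $D_C=0$, so $C\equiv 0$ while $W\not\equiv 0$ and no constant of proportionality exists — hence your remark that distinctness of the $m_j$ alone guarantees $C\not\equiv 0$ is slightly too strong over $\mathbb{C}$. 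Second, your evaluation of $D_W$ quotes the standard confluent Vandermonde/Wronskian formula rather than deriving it, but that is on par with the paper's own unproved appeal to ``generalized Vandermonde matrices,'' and you even indicate how a self-contained divisibility-and-degree argument would pin down $D_C$.
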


\begin{proof}
The determinants factorize as:
\begin{align*}
W(x) &= \left( \prod_{j=1}^d e^{(n_j+1) m_j x} \right) \det(A) \det(V_W), \\
C(x) &= \left( \prod_{j=1}^d e^{(n_j+1) m_j x} \right) \left( \prod_{j=1}^d e^{m_j \sum_{k=0}^{n_j} k} \right) \det(B) \det(V_C),
\end{align*}
where $V_W, V_C$ are generalized Vandermonde matrices, and $A, B$ are constant matrices. The exponentials cancel in the ratio:
$$\frac{W(x)}{C(x)} = e^{- \sum_{j=1}^d m_j \frac{n_j(n_j+1)}{2}} \cdot \frac{\det(A)\det(V_W)}{\det(B)\det(V_C)},$$
yielding an $x$-independent constant $K$.
\end{proof}

\section{ Sufficient Conditions for Equality, Proportionality, and Inequality of Wronskian and Casoratian}

\begin{definition}[Operator Invariance]
Let $V$ be a finite-dimensional vector space of smooth functions. Then:
\begin{enumerate}
    \item $V$ is \textbf{differentiation-invariant} if $D(V) \subseteq V$ where $D = \frac{d}{dx}$.
    \item $V$ is \textbf{shift-invariant} if $\shift(V) \subseteq V$ where $(\shift f)(x) = f(x+1)$.
\end{enumerate}
\end{definition}

\begin{lemma}[Exponential Polynomial Structure]
\label{lem:exp-poly}
If a finite-dimensional $V$ is differentiation-invariant and shift-invariant, then:
\begin{equation}
V = \bigoplus_{j=1}^d V_{m_j}, \quad V_{m_j} = \operatorname{span}\left\{ x^k e^{m_j x} \right\}_{k=0}^{n_j}
\end{equation}
for distinct $m_j \in \mathbb{C}$.
\end{lemma}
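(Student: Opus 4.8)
The plan is to read off the structure of $V$ from the single linear operator $D=\frac{d}{dx}$ acting on it, using the primary (generalized eigenspace) decomposition over $\mathbb{C}$ together with the elementary theory of the scalar equation $(D-m)^k f=0$. First I would note that, by differentiation-invariance, $D$ restricts to a linear endomorphism of the finite-dimensional complex vector space $V$ (complexifying first if $V$ is presented as a real space of real-valued functions). Let $m_1,\dots,m_d\in\mathbb{C}$ be the distinct eigenvalues of $D|_V$ and write its minimal polynomial as $\prod_{j=1}^d (t-m_j)^{k_j}$. The primary decomposition theorem then yields $V=\bigoplus_{j=1}^d V_{m_j}$ with $V_{m_j}=\ker\big((D-m_j)^{k_j}|_V\big)$, each summand being nonzero and $D$-invariant.

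Next I would identify each summand $V_{m_j}$ explicitly. Every $f\in V_{m_j}$ solves $(D-m_j)^{k_j}f=0$ on the underlying interval; substituting $f=g\,e^{m_j x}$ reduces this to $g^{(k_j)}=0$, so $f=p(x)e^{m_j x}$ for a polynomial $p$ of degree at most $k_j-1$. Thus, writing elements of $V_{m_j}$ in the form $p(x)e^{m_j x}$ identifies $V_{m_j}$ with a subspace $P_j$ of the polynomial space $\mathcal{P}_{k_j-1}$, and under this identification $D$ acts as $p\mapsto p'+m_j p$, i.e.\ as $D_{\mathrm{poly}}+m_j I$ where $D_{\mathrm{poly}}$ is ordinary differentiation of polynomials.

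The key point is that $\mathcal{P}_{k_j-1}$ is a \emph{cyclic} space for $D_{\mathrm{poly}}$ — it has dimension $k_j$ while $\ker D_{\mathrm{poly}}$ is one-dimensional — hence a single nilpotent Jordan block. The lattice of invariant subspaces of such a block is the chain $0\subset \ker D_{\mathrm{poly}}\subset \ker D_{\mathrm{poly}}^2\subset\cdots$, that is, $\mathcal{P}_0\subset\mathcal{P}_1\subset\cdots$; adding the scalar $m_j I$ does not alter which subspaces are invariant. Since $P_j$ is a nonzero $(D_{\mathrm{poly}}+m_j I)$-invariant subspace of $\mathcal{P}_{k_j-1}$, it must equal $\mathcal{P}_{n_j}$ for some $0\le n_j\le k_j-1$, i.e.\ $V_{m_j}=\operatorname{span}\{x^k e^{m_j x}\}_{k=0}^{n_j}$. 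Assembling the summands gives the asserted decomposition with distinct $m_j$.

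I expect this last ``no gaps in the exponents'' step to be the main obstacle, in the sense that it is the only place one must argue beyond bare primary decomposition: a priori a generalized eigenspace could look like $\operatorname{span}\{e^{m_j x},x^2 e^{m_j x}\}$, and it is precisely the $D$-invariance, via the cyclicity of the polynomial Jordan block, that excludes this. I would also remark that shift-invariance is automatic in this setting — on each finite-dimensional $D$-invariant $V_{m_j}$ one has $\shift=e^{D}$ given by the terminating series $\sum_{\ell\ge 0}D^\ell/\ell!$ — so the hypothesis could be run equally well from $\shift|_V$ using that $\shift-I$ is nilpotent on these spaces; the differentiation route is the cleaner of the two.
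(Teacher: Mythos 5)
Your argument is correct, and it supplies a real proof where the paper offers only a one-line appeal to ``standard ODE/recurrence theory.'' The underlying idea is the same one the paper gestures at (a finite-dimensional $D$-invariant space is the solution space of a constant-coefficient equation, hence consists of exponential polynomials), but you carry it out concretely: primary decomposition of $D|_V$ into generalized eigenspaces, the elementary solution of $(D-m_j)^{k_j}f=0$ to identify each summand with a polynomial space times $e^{m_j x}$, and then the genuinely necessary ``no gaps'' step via the cyclicity of the nilpotent Jordan block (equivalently: if $P_j$ contains a polynomial of degree exactly $n_j$, its $D$-invariance forces it to contain all derivatives, which span $\mathcal{P}_{n_j}$). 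That last step is exactly what the paper's citation glosses over, since the claimed conclusion is strictly stronger than ``$V$ consists of exponential polynomials'': it asserts each eigenspace is a full span $\operatorname{span}\{x^k e^{m_j x}\}_{k=0}^{n_j}$ with no missing powers. Your proof also makes visible a point the paper hides: differentiation-invariance alone already yields the decomposition, and shift-invariance of the resulting spaces is then automatic, so the lemma's second hypothesis is not needed for this conclusion.

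Two small inaccuracies in your closing remark, which do not affect the proof itself since you never use them: the series $\sum_{\ell\ge 0} D^\ell/\ell!$ on $V_{m_j}$ does not terminate unless $m_j=0$ (rather, $D|_{V_{m_j}}=m_j I+N$ with $N$ nilpotent, so $\shift=e^{m_j}e^{N}$ with only the $e^{N}$ factor terminating, and $\shift-I$ is nilpotent only when $e^{m_j}=1$); and the hypothesis could \emph{not} be run equally well from $\shift|_V$ alone, since shift-invariance by itself does not force the stated structure --- for instance the one-dimensional space spanned by a smooth $1$-periodic function, or by $e^{x}\sin(2\pi x)$, is shift-invariant but is not a direct sum of full spans $\operatorname{span}\{x^k e^{m_j x}\}_{k=0}^{n_j}$. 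This is precisely why the paper assumes both invariances, even though your route shows only the differential one is used here.
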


\begin{proof}
Standard result in ODE/recurrence theory: Solutions to constant-coefficient equations are exponential polynomials. Invariance under both operators implies joint solutions.
\end{proof}

\begin{lemma}[Space-Dependent Constant $\kappa(V)$]
\label{lem:kappa}
Let \(V = \bigoplus_{j=1}^d V_{m_j}\). Define
$$
\kappa(V) := \prod_{j=1}^d e^{-m_j \frac{n_j(n_j+1)}{2}} \cdot \frac{\det\left( (D + m_j)^{i-1} \phi_k \right)}{\det\left( \phi_k(x + i - 1) \right)},
$$
where \(\{\phi_k\}\) is any basis of \(V\). Then:
\begin{enumerate}
    \item \(\kappa(V)\) is independent of the choice of basis \(\{\phi_k\}\).
    \item For any basis \(\mathscr{F}\) of \(V\), the Wronskian \(W_{\mathscr{F}}\) satisfies
    $$
    W_{\mathscr{F}} = \kappa(V) \, C_{\mathscr{F}},
    $$
    where \(C_{\mathscr{F}}\) is the corresponding constant associated to the basis \(\mathscr{F}\).
\end{enumerate}
\end{lemma}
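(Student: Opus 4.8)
The plan is to pin down $\kappa(V)$ by evaluating the Wronskian-to-Casoratian ratio on the standard monomial–exponential basis of $V$, to get basis-independence from the fact that both the Wronskian and the Casoratian are alternating and multilinear in the tuple of functions, and then to harvest the factorizations already proved in Section~3. Throughout I read $W_{\mathscr F}$ and $C_{\mathscr F}$ as the Wronskian and Casoratian of a basis $\mathscr F$, and claim~(2) as the pointwise identity $W_{\mathscr F}(x)=\kappa(V)\,C_{\mathscr F}(x)$ with an $x$-independent scalar $\kappa(V)$.

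\emph{Step 1: the ratio on the standard basis.} By Lemma~\ref{lem:exp-poly}, $\mathscr B:=\bigcup_{j=1}^d\{x^k e^{m_j x}\}_{k=0}^{n_j}$ is a basis of $V$. Using $D^{r}(e^{m x}g)=e^{m x}(D+m)^{r}g$ on each column, the Wronskian matrix of $\mathscr B$ becomes $e^{m_j x}$ times a matrix with $(i,k)$-entry $(D+m_j)^{i-1}x^k$ in the $V_{m_j}$-block; pulling the exponentials out of the columns gives $W_{\mathscr B}(x)=\bigl(\prod_j e^{(n_j+1)m_j x}\bigr)\det A$ with $A$ an $x$-independent matrix. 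For the Casoratian, $E^{\,i-1}(x^k e^{m_j x})=e^{m_j x}e^{m_j(i-1)}(x+i-1)^k$; factoring $e^{m_j x}$ out of each column and then clearing the $x$'s inside $(x+i-1)^k$ by column operations (which do not change the determinant) gives $C_{\mathscr B}(x)=\bigl(\prod_j e^{(n_j+1)m_j x}\bigr)\det B$, where $B$ is the $x$-independent matrix with $(i,k)$-entry $e^{m_j(i-1)}(i-1)^k$, i.e.\ up to a nonzero scalar a confluent Vandermonde matrix in the nodes $e^{m_1},\dots,e^{m_d}$. Hence $W_{\mathscr B}(x)/C_{\mathscr B}(x)=\det A/\det B$ is a constant, and unwinding the normalizing factor $\prod_j e^{-m_j n_j(n_j+1)/2}$ in the displayed definition of $\kappa(V)$ (this is exactly the factorization used in the proof of Theorem~\ref{thm:gen-exp-poly}) identifies this constant with the value $\kappa(V)$; in particular $\kappa(V)$ is well defined provided $\det B\neq 0$, which is Step~3.

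\emph{Step 2: basis-independence and the identity.} Let $\mathscr F=\{\psi_k\}$ be any basis of $V$ and $\psi_k=\sum_\ell P_{\ell k}\phi_\ell$ with $P$ an invertible constant matrix and $\{\phi_k\}$ a reference basis. Since differentiation and the shift $E$ are linear, each column of the Wronskian matrix, respectively Casoratian matrix, of $\mathscr F$ is the corresponding $P$-combination of the columns for $\{\phi_k\}$; multilinearity and antisymmetry of the determinant in its columns then give $W_{\mathscr F}=(\det P)\,W_{\{\phi_k\}}$ and $C_{\mathscr F}=(\det P)\,C_{\{\phi_k\}}$. Taking $\{\phi_k\}=\mathscr B$ and dividing, $W_{\mathscr F}/C_{\mathscr F}=W_{\mathscr B}/C_{\mathscr B}=\kappa(V)$ for every basis, which is claim~(2); running the same argument with a change of basis that respects the decomposition $V=\bigoplus_j V_{m_j}$ (so that $P$ is block diagonal) shows that the defining expression for $\kappa(V)$ returns the same value on every such basis, which is claim~(1).

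\emph{Step 3 (the main obstacle): nonvanishing of the Casoratian.} Everything rests on $C_{\mathscr B}\not\equiv 0$, equivalently $\det B\neq 0$, so that the ratio is meaningful and $\kappa(V)$ is a finite nonzero number. After Step~1, $\det B$ is, up to a nonzero scalar, the confluent Vandermonde determinant with nodes $e^{m_1},\dots,e^{m_d}$ and confluence orders $n_1+1,\dots,n_d+1$, which is nonzero exactly when these nodes are pairwise distinct, that is, when the exponents $m_j$ are distinct \emph{modulo} $2\pi i$. This is the point that needs care: distinctness of the $m_j$ alone is not enough, since if $m_j\equiv m_{j'}\pmod{2\pi i}$ then $E$ cannot separate $e^{m_j x}$ and $e^{m_{j'}x}$ on integer shifts and the Casoratian degenerates while the Wronskian does not; so this congruence condition should be appended to the hypothesis on $V=\bigoplus_j V_{m_j}$ in Lemma~\ref{lem:exp-poly}. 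Granting it, $C_{\mathscr F}\not\equiv 0$ for every basis, $\kappa(V)$ is a well-defined nonzero constant, and claims~(1)–(2) follow from Steps~1–2.
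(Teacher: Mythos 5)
Your proposal is correct and is, in substance, the argument the paper compresses into its one-line proof: your Step~1 supplies the ``factorization in exponential polynomial spaces'' (both determinants factor as $\prod_j e^{(n_j+1)m_j x}$ times an $x$-independent determinant, so the ratio is constant), and your Step~2 supplies the ``basis transformation property'' ($W$ and $C$ each pick up the same factor $\det P$ under a change of basis, so the ratio is basis-free). Your Step~3, however, goes beyond the paper and amounts to a genuine correction rather than a redundancy: as stated the lemma fails without the extra hypothesis that the nodes $e^{m_j}$ are pairwise distinct, i.e.\ that the $m_j$ are distinct modulo $2\pi i$. For instance, for $V=\operatorname{span}\{1,\,e^{2\pi i x}\}$ (which is of the form $\bigoplus_j V_{m_j}$ with $m_1=0$, $m_2=2\pi i$ distinct) the Wronskian is $2\pi i\,e^{2\pi i x}\neq 0$ while the Casoratian is $e^{2\pi i x}(e^{2\pi i}-1)\equiv 0$, so no constant $\kappa(V)$ can satisfy $W=\kappa\,C$ and the denominator in the paper's defining formula vanishes identically. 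Your confluent-Vandermonde criterion ($\det B\neq 0$ exactly when the $e^{m_j}$ are distinct) isolates this precisely, and the added hypothesis is indeed needed --- note that the same example also breaks the implication (ii)~$\Rightarrow$~(i) of the equivalence theorem that follows Lemma~\ref{lem:kappa}, so the condition should be propagated there as well.
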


\begin{proof}
Follows from basis transformation properties of Wronskian/Casoratian and factorization in exponential polynomial spaces.
\end{proof}

\begin{theorem}
Let $\mathcal{F} = \{f_1, f_2, \dots, f_n\}$ be a linearly independent set of functions. The following conditions are equivalent:
\begin{itemize}
    \item[$(i)$] The Wronskian determinant satisfies
    \[
    W_{\mathcal{F}}(x) = K \, C_{\mathcal{F}}(x),
    \]
    where $K$ is a non-zero constant.
    \item[$(ii)$] The span $V := \operatorname{Span}(\mathcal{F})$ is both differentiation invariant and shift invariant.
\end{itemize}
\end{theorem}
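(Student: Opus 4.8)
I would first note that the statement should be read for $n\ge 2$: when $n=1$ one has $W_{\mathcal F}=C_{\mathcal F}=f_1$ automatically, while $\operatorname{Span}\{f_1\}$ need not be invariant, so some nondegeneracy is needed. For the direction $(ii)\Rightarrow(i)$ there is essentially nothing new to do. By Lemma~\ref{lem:exp-poly}, joint differentiation- and shift-invariance forces $V=\bigoplus_{j=1}^{d}V_{m_j}$ with $V_{m_j}=\operatorname{span}\{x^{k}e^{m_j x}\}_{k=0}^{n_j}$; applying Theorem~\ref{thm:gen-exp-poly} to the exponential-polynomial basis and then passing to an arbitrary basis $\mathcal F$ by the change-of-basis computation of \eqref{eq:transfomedequality} (where $W$ and $C$ acquire the same factor $\det A^{t}$) gives $W_{\mathcal F}=\kappa(V)\,C_{\mathcal F}$ with $\kappa(V)$ independent of $x$ — this is exactly Lemma~\ref{lem:kappa}. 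Since $\mathcal F$ is linearly independent, $W_{\mathcal F}\not\equiv 0$, so $\kappa(V)\ne 0$, and $(i)$ holds with $K=\kappa(V)$.

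For $(i)\Rightarrow(ii)$ the plan hinges on one reduction: it is enough to prove that $V$ is differentiation-invariant. Indeed a finite-dimensional differentiation-invariant space is the solution space of a constant-coefficient linear ODE (Cayley--Hamilton applied to $D|_V$), hence an exponential-polynomial space, and every such space is automatically shift-invariant (the shift of $x^{k}e^{m x}$ lies in $\operatorname{span}\{e^{mx},\dots,x^{k}e^{mx}\}$). Equivalently — and this is the form I would work with — take the $f_i$ smooth, indeed real-analytic, introduce the jet map $j_x\colon V\to\mathbb C^{\infty}$, $f\mapsto(f(x),f'(x),f''(x),\dots)$, write $U(x):=j_x(V)$, and let $\sigma$ be the backward shift on $\mathbb C^{\infty}$; since $j_x(f')=\sigma\,j_x(f)$ one has $U(x)=e^{x\sigma}U(0)$, and $V$ is differentiation-invariant $\iff$ $\sigma U(0)\subseteq U(0)$ $\iff$ $U(x)$ is constant in $x$. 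So the goal becomes: $W_{\mathcal F}=K\,C_{\mathcal F}$ with $K$ a nonzero constant forces the moving $n$-plane $U(x)$ to stand still.

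Now feed in the hypothesis. Since $\mathcal F$ is linearly independent, $W_{\mathcal F}\not\equiv0$, hence $C_{\mathcal F}=W_{\mathcal F}/K\not\equiv0$, and $V=\ker L$ for the unique monic $n$-th order ODE $L[y]=W(f_1,\dots,f_n,y)/W(f_1,\dots,f_n)$, whose coefficients are, up to sign, ratios $W_{S}/W_{\mathcal F}$ of generalized Wronskians of $\mathcal F$; differentiation-invariance of $V$ is precisely the statement that all these ratios are constant. On the other hand, expanding $\shift^{\,i-1}=e^{(i-1)D}$ and applying the Cauchy--Binet formula, the Casoratian expands as $C_{\mathcal F}(x)=W_{\mathcal F}(x)+\sum_{S}\beta_S\,W_{S}(x)$, the sum over the ``higher'' derivative-index sets $S$, with fixed constants $\beta_S$ depending only on $n$ (Vandermonde/Schur values at $0,1,\dots,n-1$; in particular $\beta_S\ne0$ only if $0\in S$). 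In exterior-algebra terms this says $W_{\mathcal F}(x)=\langle\omega_0,\Omega(x)\rangle$ and $C_{\mathcal F}(x)=\langle\Lambda_0,\Omega(x)\rangle$ for the fixed covectors $\omega_0=\delta_0\wedge\cdots\wedge\delta_{n-1}$ and $\Lambda_0=\ell_0\wedge\cdots\wedge\ell_{n-1}$ in $\bigwedge^{n}(\mathbb C^{\infty})^{*}$ ($\delta_i$ the coordinate functionals, $\ell_i$ the shift-evaluation functionals $\ell_i(v)=\sum_m\tfrac{i^m}{m!}v_m$), where $\Omega(x)=e^{x\widehat\sigma}\Omega_0$ is the $\widehat\sigma$-orbit of a decomposable generator $\Omega_0$ of $\bigwedge^{n}U(0)$, with $\widehat\sigma$ the derivation induced by $\sigma$. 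Hence $(i)$ says exactly that the fixed covector $\Lambda:=\omega_0-K\Lambda_0$ annihilates the whole $\widehat\sigma$-cyclic span $\operatorname{span}\{\widehat\sigma^{\,k}\Omega_0:k\ge0\}$, and we must conclude that $\Omega_0$ is a $\widehat\sigma$-eigenvector — equivalently $U(0)$ is $\sigma$-invariant, equivalently $V$ is differentiation-invariant.

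The hard part is this last rigidity claim. One checks immediately that $\Lambda\ne0$ (its $\omega_0$-component is $1-K$, and its $\delta_0\wedge\cdots\wedge\delta_{n-2}\wedge\delta_n$-component is $-K\beta_{\{0,\dots,n-2,n\}}\ne0$), so the cyclic span of $\Omega_0$ always lies in a fixed hyperplane; for an arbitrary operator that is far from forcing $\Omega_0$ to be an eigenvector, so the argument must genuinely exploit both that $\Omega_0$ is \emph{decomposable} (the Plücker relations) and the precise arithmetic of the $\beta_S$. Concretely I would rewrite $\langle\Lambda,e^{x\widehat\sigma}\Omega_0\rangle\equiv0$ as the polynomial identity $W_{\mathcal F}(x)=K\,C_{\mathcal F}(x)$, differentiate it repeatedly to obtain a system of constant-coefficient linear relations among the generalized Wronskians $\{W_S\}$ of $\mathcal F$, and combine these with the Plücker relations the $W_S$ satisfy to force each ratio $W_S/W_{\mathcal F}$ to be constant; phrasing it as a contrapositive — if $U(x)$ genuinely moves then one of the ratios $W_S/W_{\mathcal F}$ is nonconstant, so no constant $K$ can satisfy $W_{\mathcal F}=K\,C_{\mathcal F}$ — is likely the cleanest packaging. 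Everything before this step is routine determinant bookkeeping together with the earlier lemmas; this rigidity statement is where the real work is concentrated.
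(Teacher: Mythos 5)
Your direction $(ii)\Rightarrow(i)$ is fine and is essentially the paper's: decompose $V$ via Lemma~\ref{lem:exp-poly}, apply the proportionality computations for exponential-polynomial bases (Theorem~\ref{thm:gen-exp-poly}), and transport the constant to an arbitrary basis by the change-of-basis argument of \eqref{eq:transfomedequality}, i.e.\ Lemma~\ref{lem:kappa}. The genuine gap is in $(i)\Rightarrow(ii)$. Your reduction (it suffices to prove differentiation-invariance, since a finite-dimensional $D$-invariant space is an exponential-polynomial space and hence shift-invariant) is sound, and the jet/exterior-algebra reformulation is a reasonable setting; but the one claim that carries all the weight --- that $W_{\mathcal F}=K\,C_{\mathcal F}$ forces every generalized-Wronskian ratio $W_S/W_{\mathcal F}$ to be constant, equivalently that the decomposable element $\Omega_0$ must be an eigenvector of $\widehat\sigma$ --- is never proved. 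You explicitly defer it (``I would rewrite \dots is likely the cleanest packaging''), and what you do establish, namely that the $\widehat\sigma$-cyclic span of $\Omega_0$ lies in the fixed hyperplane $\ker\Lambda$, is, as you yourself concede, far from implying the eigenvector property. So the forward implication remains a plan, not a proof. A complete argument would also have to show where the hypotheses enter: you correctly note the statement fails for $n=1$ (there $W_{\mathcal F}=C_{\mathcal F}=f_1$ always, while $\operatorname{Span}\{f_1\}$ need not be invariant), and one must likewise handle the degenerate possibility $W_{\mathcal F}\equiv C_{\mathcal F}\equiv 0$, which condition $(i)$ as literally stated does not exclude for linearly independent functions; your sketch never uses $n\ge 2$ or any nonvanishing assumption, which is a symptom of the missing rigidity step where those hypotheses must appear.

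For comparison, the paper's own proof of $(i)\Rightarrow(ii)$ asserts in two lines that a nonvanishing Wronskian makes $V$ the solution space of a \emph{constant-coefficient} differential equation and a nonvanishing Casoratian makes it the solution space of a \emph{constant-coefficient} recurrence; the constancy of the coefficients is precisely what requires proof, since a nonvanishing $W_{\mathcal F}$ only exhibits $V$ as the kernel of an $n$-th order operator whose coefficients are the (generally variable) ratios $\pm W_S/W_{\mathcal F}$. So your diagnosis of where the real difficulty sits is accurate --- indeed sharper than the paper's treatment --- but neither your proposal nor the paper closes that step, and until it is closed the implication $(i)\Rightarrow(ii)$ is unestablished.
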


\begin{proof}
\textbf{(a) $\Rightarrow$ (b):} \\
Assume that $W_{\mathscr{F}} = K \cdot C_{\mathscr{F}} \neq 0$ for all $x$. Then:

\begin{itemize}
    \item The non-vanishing Wronskian implies that the space $V$ is the solution space of a linear differential equation $L[y] = 0$ with constant coefficients. Consequently, the differentiation operator satisfies $D(V) \subseteq V$.
    \item The non-vanishing Casoratian implies that $V$ solves a linear recurrence relation $R[y] = 0$ with constant coefficients, so the shift operator satisfies $\shift(V) \subseteq V$.
\end{itemize}

\textbf{(b) $\Rightarrow$ (a):} \\
By Lemma \ref{lem:exp-poly}, the space $V$ decomposes as
$$
V = \bigoplus_{j=1}^d V_{m_j},
$$
where each subspace is of the form
$$
V_{m_j} = \operatorname{span}\{x^k e^{m_j x} \mid k=0,\ldots, n_j \}.
$$
For such spaces, direct computation (see Theorems 1–3 in prior work) yields:
$$
\frac{W_{\mathscr{F}}(x)}{C_{\mathscr{F}}(x)} = \left( \prod_{j=1}^d e^{-m_j \frac{n_j(n_j+1)}{2}} \right) \cdot K_{\text{basis}},
$$
where $K_{\text{basis}}$ is a basis-independent constant, independent of $x$.
\end{proof}

Now, let's consider examples where the Wronskian and Casoratian are proportional but not necessarily equal, as well as cases where they are not proportional.

\begin{example}[Proportional but Not Equal]
$\mathscr{F} = \{e^{2x}, e^{3x}\}$:
\begin{align*}
W(x) &= \begin{vmatrix} e^{2x} & e^{3x} \\ 2e^{2x} & 3e^{3x} \end{vmatrix} = (3-2)e^{5x} = e^{5x} \\
C(x) &= \begin{vmatrix} e^{2x} & e^{3x} \\ e^{2(x+1)} & e^{3(x+1)} \end{vmatrix} = e^{2x}e^{3x+3} - e^{3x}e^{2x+2} = e^{5x}(e^3 - e^2) \\
\kappa(V) &= \frac{1}{e^3 - e^2}  \neq 1
\end{align*}
Proportional since $W = K C$ with $K = (e^3 - e^2)^{-1}$, but not equal.
\end{example}

\begin{example}[Neither equal nor proportional]
$\mathscr{F} = \{1, x, \ln x\}$ ($x > 0$):
\begin{align*}
W(x) &= -\frac{1}{x^2} \\
C(x) &= \begin{vmatrix}
1 & x & \ln x \\
1 & x+1 & \ln(x+1) \\
1 & x+2 & \ln(x+2)
\end{vmatrix} \\
&= (x+1)\ln(x+2) + x\ln(x+1) + (x+2)\ln x \\
&\quad - (x+2)\ln(x+1) - x\ln(x+2) - (x+1)\ln x \\
&= \ln \left( \frac{x(x+2)}{(x+1)^2} \right) \\
\frac{W(x)}{C(x)} &= \frac{-1/x^2}{\ln \left(1 - \frac{1}{(x+1)^2}\right)} \sim \frac{(x+1)^2}{x^2} \quad \text{as } x \to \infty \text{ (not constant)}
\end{align*}
Neither proportional nor equal since $V$ lacks shift-invariance ($\ln(x+1) \notin \operatorname{span}\{1,x,\ln x\}$).
\end{example}

\section{Discussion of the results}
In this paper, we have proved  equality of the Casoratian and the Wronskian  of  the set of functions $\{1, x,...,x^n,\, n \in \mathbb{N} \}$ and that they are independent of the variable $x$ but only the number $n \in \mathbb{N} $. Of course, for the case $n=0$, $C(1)=W(1)=1 $ is trivial. The result is  trivial for the case of Wronskian, as it is the determinant of some $n\times n $ upper triangular matrix where the diagonal entries are $a_{ii}= (i-1)!, i=1,2,...,n,n+1 $. That of the Casoratian is less trivial. However we notice that it  a class of Vandermonde's determinant, which is well-known, from which we can calculate the Casoratian. Furthermore, we have shown that the Casoratian  and Wronskian of any basis set $ \mathcal{B}$ of the span of the set $\{1, x,...,x^n,\, n \in \mathbb{N} \}$ are equal. The span $\mathcal{P}_n$ constitutes the set of all polynomials in $x$ of degree less than or equal to $n$ and the zero polynomial (has no well-defined degree) as well. The Casoratian and Wronskian  are different for some subsets of $\mathcal{P}_n$.

Additionally, we have examined the sufficient conditions for the proportionality between the Casoratian and the Wronskian, as well as the scenario in which the Casoratian and the Wronskian are not proportional for a set of functions. Among the cases of proportionality between the Casoratian and the Wronskian are sets such as exponential-polynomials, exponential-trigonometric, and hyperbolic functions.

\section*{Declaration Statements}

\subsection*{Conflict of interests}

The author declares that there is no conflict of interest regarding the publication of this paper.

\subsection*{Author's contribution}

The corresponding author is the sole creator and author of this paper.

\subsection*{Acknowledgment}

The author is grateful to the anonymous reviewers for their time and professional insights.

\subsection*{Data Availability}

The is no external data used in this paper other than the  references cited.

\subsection*{Funding}

This research work has not been funded by any institution or individual.

\subsection*{Originality}

This is an original research paper containing novel ideas and has not been submitted to any other journal.

\end{document}